\documentclass[12pt]{article}      

\usepackage[title]{appendix}

\setlength{\parindent}{0in}
\setlength{\parskip}{\baselineskip}
\setlength{\textheight}{9in}
\setlength{\textwidth}{6.3in}
\setlength{\topmargin}{-0.6in}
\setlength{\evensidemargin}{-.5in}
\setlength{\oddsidemargin}{0in}

\usepackage{url}

\usepackage{amsmath}
\usepackage{amsthm}
\usepackage{amssymb}

\newcommand{\R}{\ensuremath{\mathbb{R}}}

\renewcommand{\P}{\ensuremath{\mathbb{P}}}

\newtheorem{theorem}{Theorem}[section]
\newtheorem{lemma}[theorem]{Lemma}
\newtheorem{corollary}[theorem]{Corollary}
\newtheorem{proposition}[theorem]{Proposition}
\newtheorem{remark}[theorem]{Remark}

\newtheorem{definition}[theorem]{Definition}

\usepackage{color}
\definecolor{maroon}{rgb}{.69,.188,.376}
\definecolor{darkgreen}{rgb}{0,.5,0}
\definecolor{darkblue}{rgb}{0,0,.5}
\definecolor{magenta}{rgb}{1,0,1}

\usepackage[colorlinks=true]{hyperref}
\definecolor{Red}{rgb}{1,0,0}
\definecolor{Blue}{rgb}{0,0,1}
\definecolor{Olive}{rgb}{0.41,0.55,0.13}
\definecolor{Yarok}{rgb}{0,0.5,0}
\definecolor{Green}{rgb}{0,1,0}
\definecolor{MGreen}{rgb}{0,0.8,0}
\definecolor{DGreen}{rgb}{0,0.55,0}
\definecolor{Yellow}{rgb}{1,1,0}
\definecolor{Cyan}{rgb}{0,1,1}
\definecolor{Magenta}{rgb}{1,0,1}
\definecolor{Orange}{rgb}{1,.5,0}
\definecolor{Violet}{rgb}{.5,0,.5}
\definecolor{Purple}{rgb}{.75,0,.25}
\definecolor{Brown}{rgb}{.75,.5,.25}
\definecolor{Grey}{rgb}{.7,.7,.7}
\definecolor{Black}{rgb}{0,0,0}

\hypersetup{pdftex, colorlinks=true, linkcolor=maroon, citecolor=maroon,
  filecolor=blue,urlcolor=blue}

\newcommand{\ignore}[1]{{}}

\numberwithin{equation}{section}

\date{\today}

\begin{document}

\title{Smoothness of Flow and Path-by-Path Uniqueness in Stochastic Differential Equations}

\author{Siva Athreya
  \and
  Suprio Bhar
  \and  Atul Shekhar
}

\maketitle
\begin{abstract}
We consider the stochastic differential equation
$$ X_t = x_0 + \int_0^t f(X_s)ds + \int_0^t\sigma(X_s)dB^{H}_s,$$ with $x_0 \in \R^d$, $d \geq 1$, $f: \R^d \rightarrow
\R^d$ is bounded continuous, $\sigma: \R^d \rightarrow \R^{d\times d}$ is a
uniformly elliptic, bounded, twice continuously differentiable
conservative vector field and $B^H$ is fractional Brownian motion with
$H \in (\frac{1}{3}, \frac{1}{2}]$. When $d=1$, $H= \frac{1}{2}$, and
  $f$ is H\"older continuous, in the spirit of Davie \cite{davie}, we establish the existence of a null set
  $\mathcal{N}$ depending only on $f, \sigma$ such that for all $x_0\in
  \mathbb{R}$ and $\omega \in \Omega\setminus \mathcal{N}$, the above equation admits a path-by-path unique
  solution. Our proof is based on establishing the uniform continuous differentiability of the flow associated with the
  equation. We also establish the path-by-path uniqueness for $d \geq 1$ and $H \in (\frac{1}{3}, \frac{1}{2}]$, but the null set may depend on $x_0$, thus extending a
    result of Catellier-Gubinelli \cite{CG}.
\end{abstract}

\noindent {\em AMS 2010 Subject Classification :}  60J65,  60H10, 60G17.\\
\noindent {\em Keywords :} Fractional Brownian motion, Rough Differential Equations, Rough Paths,  Path by Path Uniqueness, Smooth Flow.

\section{Introduction}

In this article we consider stochastic differential equations (SDEs) of the
form,
\begin{equation}
\label{modelSDE} X_t =  x_0 + \int_0^tf(X_s)ds + \int_0^t\sigma(X_s)d{B}^H_s
\end{equation}
with $x_0 \in \R^d$, $d \geq 1$, $f: \R^d \rightarrow \R^d$ is bounded continuous,
$\sigma: \R^d \rightarrow \R^{d\times d}$ is a uniformly elliptic, bounded,
twice continously differentiable conservative vector field and $B^H$
is fractional Brownian motion with $H \in (\frac{1}{3}, \frac{1}{2}]$.

When $\sigma \equiv 0$, \eqref{modelSDE} reduces to a differential
equation. It is well known that such differential equations need not
have a unique solution. However  for non-zero $\sigma$ the phenomenon called
regularisation by noise or attributed as the ``regularizing
 effect of quadratic variation (resp. local time) of Brownian motion (resp. fractional Brownian motion)"  as mentioned in
 [\cite{RY}-Chapter 9-section 3] comes into play.

This was demonstrated for $\sigma \equiv 1$, $H=\frac{1}{2}$ (then $B^H$ is
the standard Brownian motion) in the seminal paper \cite{davie} by A.M. Davie. The case of $\sigma \equiv 1$ and general $H \in (0,1) $ was also understood deeply by Catellier and Gubinelli in \cite{CG}. In such cases the above equation reduces to
\begin{equation}
\label{modelSDEs1} X_t =  x_0 + \int_0^tf(X_s)ds + {B}^H_t.
\end{equation}
The existence and uniqueness of the solution for the (\ref{modelSDEs1}) interpreted as an ordinary differential equation (ODE)
was shown in the respective cases in
\cite{davie, CG} ($f$ is taken to be only bounded measurable in \cite{davie}). In both of the above papers it is shown that there exists
a null set ${\cal N}$ under the law of $B^H$ depending on $x_0$ and $f$ such that for each
$\omega \notin {\cal N}$, (\ref{modelSDEs1}) has a
unique solution in the space of continuous curves. Such uniqueness of solution is referred to as
\textit{path-by-path} uniqueness as coined by Flandoli in
\cite{F}. This is a stronger notion of uniqueness than the strong uniqueness of solutions to SDEs where the SDEs are solved in the space of adapted processes. \

The regularizing effect of Brownian motion has been explored earlier
using It\^o Calculus by Veretennikov \cite{V81} for more general $\sigma$, Krylov and Roeckner
\cite{KR} on singular stochastic differential equations and work of
Flandoli, Gubinell and Priola \cite{FGP} on transport equations. This regularisation effect  has been extended to infinite dimensional setting of stochastic heat equation by Butkovsky and Mytnik in \cite{BM}. More recently, using the work of Fedrizzi and Flandoli in \cite{Flandoli-Fedrizzi}, Shaposhnikov in \cite{S1,S2} presented a simpler proof of Davie's result by establishing a H\"older regularity on the flow associated to equation \eqref{modelSDEs1}.

In \cite{davie1}, (\ref{modelSDE}) is considered with $H=\frac{1}{2}$,
bounded measurable $f$ and $\sigma$ to be an invertible matrix-valued
function satisfying some regularity conditions. A sketch of proof for
path-by-path uniqueness is presented, interpreting the
(\ref{modelSDE}) in a ``rough path" sense. However, to the best of our
knowledge, it is not possible to convert the outlined sketch to a 
precise proof.

In this paper we consider (\ref{modelSDE}) as a rough differential
equation (RDE) (see (\ref{modelRDE})). For $d=1$, $H= \frac{1}{2}$,
and when $f$ is H\"older continuous, we establish the existence of a
null set depending only on $f, \sigma$ such that for all $x_0 \in
\mathbb{R}$ and $\omega \in \Omega\setminus \mathcal{N}$, the
(\ref{modelSDE}) viewed as a RDE admits a unique solution (See Theorem
\ref{pbypins}). For proving this, using techniques from rough path
theory, we develop a variant of It\^o formula and use it to construct
a bijective diffeomorphism that transforms the equation
\eqref{modelSDE} to the case of $\sigma \equiv 1$ (See Proposition
\ref{transformation}). The proof then follows from establishing the
continuous differentiability of the flow associated with the equation
(see Theorem \ref{just-flow}). Continuous differentiability of the flow
is proved by building upon a result on existence of flow by Fedrizzi
and Flandoli in \cite{Flandoli-Fedrizzi} for compactly supported
functions $f$. We establish an exponential identity for this flow, see
equation \eqref{derivative-identity}.  We then show that the terms in
the exponential are stochastic integrals with continuous modifications
(See Proposition \ref{exp-identity}), yielding smooth modification for
the flow constructed from \cite{Flandoli-Fedrizzi}. The case of the
general $f$ follows via a localisation argument.

Moreover, the rough It\^o formula mentioned above works well in general dimensions $d\geq 1$ and for $H
\in (\frac{1}{3}, \frac{1}{2}],$ and we use it show that the (\ref{modelSDE}) viewed as a RDE has a path by path unique solution outside a null set of
  $B^H$, depending on $x_0, f, \sigma$ (see Theorem \ref{gen-catellier-gubinelli}). The argument for the smoothness of the flow  which works well for $d=1$ however doesn't easily generalize to the case of higher dimensions. The techniques required to emulate this approach are more involved in higher dimensions. We plan to carry this out in a future project.

\subsection{Model and Main results}
 Let $d \geq 1, x_0 \in \R^d$ and $M_{d\times d}(\R)$ be $d$-dimensional matrices with real entries.  Let $f: \R^d \rightarrow \R^d$ and $\sigma: \R^d \rightarrow M_{d \times d}(\R)$ such that
\begin{itemize}

\item[($F1$)] $f:\R^d \rightarrow \R^d$ is a bounded continuous function.

\item[($S1$)] $\sigma: \R^d \rightarrow M_{d\times d}(\R) $ is continuously twice differentiable and  \[||\sigma||_{\infty} + ||D\sigma||_{\infty} + ||D^2\sigma||_{\infty} < \infty,\]
where $\parallel \cdot \parallel_\infty$ is the supremum norm, $D\sigma$ is the total derivative  of $\sigma$ and $D^2\sigma$ is the Hessian of $\sigma$.

\item[($S2$)] $\sigma$ is uniformly elliptic, i.e. there exist a constant $\lambda > 0$ such that for all $v, x \in \mathbb{R}^d$, 
\[ |v^{\intercal}\sigma(x)v| \geq \lambda ||v||^2.\]
 In particular, $\sigma(x)$ is an invertible matrix for all $x\in \R^d$.

\item[($S3$)] $\sigma^{-1}$ is a conservative vector field, i.e. the path integral  
\[ \int_{\gamma} \sigma(x)^{-1}dx \]
depends only on the end points of path $\gamma$ for any given path $\gamma$ in $\mathbb{R}^d$.

\end{itemize}

\begin{definition}[Fractional Brownian motion] \label{d-fbm}
Let $(\Omega, {\cal F} , \{{\cal F}_t\}_{t \geq 0} , \P)$ be a complete filtered probability space. The fractional Brownian motion $\{B_t^H\}_{t\geq 0}$ with the Hurst parameter $H$ is the unique adapted and centered Gaussian process with values in $\mathbb{R}^d$ and covariance matrix $C^H(s,t)$ given by 
\[ C^H(s,t) = \frac{1}{2}(t^{2H} + s^{2H} - |t-s|^{2H})I_d,\]
where $I_d$ is the $d\times d$ identity matrix. 
\end{definition}

The processes $B^H$ can naturally be lifted to a rough path almost surely and one can use such rough paths to define integration against $B^H$, see Section \ref{a-prps} for a brief introduction to rough path theory. We will use rough path theory to give a deterministic interpretation of equation \eqref{modelSDE} by equation \eqref{modelRDE} below. We also note that use of rough path theory to give a deterministic interpretation of stochastic integrals can also be considered as a generalization of work of F\"ollmer \cite{follmer} and Karandikar \cite{karan}.

\begin{lemma} \label{rough-path-fbm} Let $H \in (\frac{1}{3}, \frac{1}{2}]$ and $\alpha \in  (\frac{1}{3}, H)$. Let  $B^H$ be the fractional Brownian motion defined above. There exists $\Omega^\prime \subset \Omega$ with $\P(\Omega^\prime) =1 $ and objects $\mathbf{B}^H(\omega) = (B^H(\omega), \mathbb{B}^H(\omega))$  such that for each $\omega \in \Omega^{\prime}$, $\mathbf{B}^H(\omega)$ is an $\alpha$ geometric rough path.   
\end{lemma}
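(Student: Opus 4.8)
The plan is to realize $\mathbf{B}^H = (B^H, \mathbb{B}^H)$ as the almost-sure limit of the canonical rough-path lifts of smooth approximations to $B^H$, verifying the required H\"older estimates by exploiting the Gaussian structure together with a Kolmogorov-type criterion for rough paths. Concretely, fix a mollification or piecewise-linear interpolation $B^{H,n}$ of $B^H$ and set $\mathbb{B}^{H,n}_{s,t} = \int_s^t (B^{H,n}_r - B^{H,n}_s)\otimes dB^{H,n}_r$, which is well defined as a Riemann--Stieltjes integral since $B^{H,n}$ is smooth. The first step is to show that for each $(s,t)$ the pair $(B^{H,n}_{s,t}, \mathbb{B}^{H,n}_{s,t})$ converges in $L^2(\Omega)$ to a limit $(B^H_{s,t}, \mathbb{B}^H_{s,t})$; the increment $B^H_{s,t} = B^H_t - B^H_s$ converges trivially, while convergence of the iterated integral reduces to showing that the inner products $\E[\mathbb{B}^{H,n}_{s,t}\,\mathbb{B}^{H,m}_{s,t}]$ form a Cauchy sequence. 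These can be written as two-dimensional integrals against the mixed second derivative of the covariance $C^H$, and their convergence is exactly where the hypothesis $H>\frac13$ is used.

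The second step is to obtain moment bounds of the correct homogeneity. From Definition \ref{d-fbm} one reads off $\E[|B^H_{s,t}|^2] = d\,|t-s|^{2H}$, and the construction of $\mathbb{B}^H$ as a second-chaos limit gives $\E[|\mathbb{B}^H_{s,t}|^2] \lesssim |t-s|^{4H}$, with constant depending only on $H$ and $d$. Since $B^H_{s,t}$ lies in the first Wiener chaos and $\mathbb{B}^H_{s,t}$ in the second, Gaussian hypercontractivity (equivalence of all $L^p$ norms within a fixed chaos) upgrades these to $\E[|B^H_{s,t}|^p]^{1/p} \lesssim |t-s|^{H}$ and $\E[|\mathbb{B}^H_{s,t}|^{p/2}]^{2/p} \lesssim |t-s|^{2H}$ for every $p \geq 2$. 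The Kolmogorov criterion for rough paths then yields, for each fixed $p$, a continuous modification that is almost surely an $\alpha$-H\"older rough path for every $\alpha < H - \frac1p$; letting $p \to \infty$ and intersecting the corresponding full-measure sets over a countable sequence produces a single $\Omega^\prime$ with $\P(\Omega^\prime)=1$ on which $\mathbf{B}^H$ is $\alpha$-H\"older for every $\alpha \in (\frac13, H)$.

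It remains to check that the limiting object is genuinely a rough path and that it is geometric. Chen's relation $\mathbb{B}^{H,n}_{s,t} - \mathbb{B}^{H,n}_{s,u} - \mathbb{B}^{H,n}_{u,t} = B^{H,n}_{s,u}\otimes B^{H,n}_{u,t}$ holds identically for the smooth approximations and is stable under the limits above, so it is inherited by $(B^H, \mathbb{B}^H)$; likewise the symmetry relation $\mathbb{B}^{H,n}_{s,t} + (\mathbb{B}^{H,n}_{s,t})^{\top} = B^{H,n}_{s,t}\otimes B^{H,n}_{s,t}$ passes to the limit, which is precisely the condition that $\mathbf{B}^H$ be geometric. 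The main obstacle, and the only place where the structure of fBm enters essentially, is the second step: proving convergence of the iterated integrals together with $\E[|\mathbb{B}^H_{s,t}|^2]\lesssim |t-s|^{4H}$. This rests on analyzing the singularity of the mixed derivative of $C^H$ near the diagonal, equivalently on the finiteness of the two-dimensional $\rho$-variation of $C^H$ with $\rho = \frac{1}{2H} < \frac32$; this finiteness fails once $H \leq \frac13$, which is the reason for the standing assumption $H \in (\frac13, \frac12]$. As this is by now classical, one may alternatively invoke the general theory of Gaussian rough paths, under which a centered Gaussian process whose covariance has finite two-dimensional $\rho$-variation for some $\rho < \frac32$ admits a canonical geometric rough-path lift.
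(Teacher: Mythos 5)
Your proposal is correct and is essentially the argument the paper relies on: the paper's proof of Lemma \ref{rough-path-fbm} simply cites the construction of Coutin--Qian and the Gaussian rough path theory of Friz--Victoir (Chapter 10 of \cite{FH12}), which is exactly the route you sketch --- piecewise-linear approximation, $L^2$ convergence of the iterated integrals via the covariance, chaos/hypercontractivity moment bounds, the Kolmogorov criterion for rough paths, and passage of Chen's relation and the symmetry condition to the limit. The only small imprecision is the attribution of the threshold: the level-two iterated integrals converge already for $H>\frac14$ (equivalently $\rho<2$), and the hypothesis $H>\frac13$ (equivalently $\rho<\frac32$) is what guarantees that a level-two lift suffices to obtain an $\alpha$-rough path with $\alpha>\frac13$.
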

\begin{proof}  The rough path $\mathbf{B}^H$ was constructed in
  \cite{Coutin-Qian}. See \cite{Coutin-Qian}, \cite{FV07} or Chapter $10$ of
  \cite{FH12} for details. 
  \end{proof}

The choice of the object $\mathbf{B}^H$ enhancing $B^H$ to a geometric rough path is of course non-unique, e.g. for any anti symmetric matrix $M$, $\mathbb{B}^{H,M}_{s,t} = \mathbb{B}^{H}_{s,t} + M (t-s) $ defines a yet another geometric rough path. We will fix a choice of a geometric rough path $\mathbf{B}^{H}$ in below. For the special case $H= \frac{1}{2}$, we make the choice $\mathbf{B}^H = \mathbf{B}^{Strat}$ defined by 
\[\mathbb{B}_{s,t}^{Strat} := \int_s^t \{B_r -B_s \}\otimes \circ dB_r,\]
where $\circ dB_r$ denotes the Stratonovich integral. We will also consider a non-geometric rough path $\mathbf{B}^{Ito}$ defined by \[\mathbb{B}_{s,t}^{Ito} := \int_s^t \{B_r -B_s \}\otimes dB_r,\]
where $dB_r$ denotes the usual It\^o integral. As we will see, the results of this paper is independent of the choices of the rough paths being made here.  

Let $C^{\alpha}_{B^H}(\mathbb{R}^d)$ be the space of $B^H$-controlled rough paths taking values in $\mathbb{R}^d$. Define the map $\Psi: C^{\alpha}_{B^H}(\mathbb{R}^d) \to
C^{\alpha}_{B^H}(\mathbb{R}^d)$ by
\begin{equation} \label{Psi} \Psi(Y, Y') (t):= \biggl(x_0 + \int_0^t f(Y_r)dr + \int_0^t \sigma(Y _r)d\mathbf{B}^H_r , \sigma(Y_t) \biggr)
\end{equation}
for $t \in [0,T]$. Note that first integral in the definition of
$\Psi$ is a Riemann integral and the second integral is the rough
integral of controlled rough path $(\sigma(Y), \sigma'(Y)Y')$ against
the rough path $\mathbf{B}^H$. Under assumptions (F1) and (S1) it is
easy to check that $\Psi(Y, Y')$ is indeed a controlled rough path and
$\Psi$ is well defined.

A controlled rough path $(X,X')\in C^{\alpha}_{B^H}(\mathbb{R}^d)$ is called a solution to 
\begin{equation}
\label{modelRDE} X_t =  x_0 + \int_0^tf(X_s)ds + \int_0^t\sigma(X_s)d\mathbf{B}^H_s
\end{equation}
if it is a fixed point of $\Psi$, i.e $\Psi(X,X^\prime) = (X,
X^\prime)$. The uniqueness of solution to \eqref{modelRDE} means that for any two solutions $(X, X')$ and $(\tilde{X}, \tilde{X}')$ to \eqref{modelRDE}, the equality $ (X, X') =(\tilde{X}, \tilde{X}')$ holds. Further, since fractional Brownian motions are truly rough, see \cite{FS1}, by \eqref{doob-meyer-thm}, establishing $X= \tilde{X}$ for any two
solutions $(X,X')$ and $(\tilde{X}, \tilde{X}')$ of equation
\eqref{modelRDE} gives the uniqueness of solution to
\eqref{modelRDE}. Also, if $(X,X')$ is a solution to \eqref{modelRDE},
then it implies that $(X, \sigma(X))$ is also a controlled rough path
and thus $X' = \sigma(X)$. We will exploit this fact very crucially.

We are now ready to state our main results. Our first main result is
about path by path existence and uniqueness for (\ref{modelRDE}). The
term \textit{path-by-path} existence and uniqueness of the solution
was coined by F. Flandoli in \cite{F} and refers to the following
definition. Recall that a set $\mathcal{N} \subset \Omega$ is called a
null set if $\mathbb{P}(\mathcal{N}) = 0$.

\begin{definition} \label{pbypeu}
The equation \eqref{modelRDE} is said to have a path-by-path unique
solution if there exist a null set $\mathcal{N}$ depending on
$f,\sigma$ and $x_0$ such that for all $\omega \notin \mathcal{N}$, the 
equation \eqref{modelRDE} has a unique solution.
\end{definition}

\begin{theorem} \label{gen-catellier-gubinelli}  Let $d \geq 1$, $x_0 \in \R^d$. Let $B^H$ be the fractional Brownian motion with $H \in (\frac{1}{3}, \frac{1}{2}]$.  Assume $(F1), (S1)-(S3)$. The equation \eqref{modelRDE} has a path-by-path unique solution. 
\end{theorem}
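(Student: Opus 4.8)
The plan is to reduce the equation \eqref{modelRDE} to the additive-noise case $\sigma \equiv 1$ treated by Catellier and Gubinelli \cite{CG} (and by Davie \cite{davie} when $H=\frac12$) by means of the bijective diffeomorphism furnished by Proposition \ref{transformation}, and then to pull the resulting path-by-path uniqueness back through this diffeomorphism. Concretely, the conservativity assumption $(S3)$ guarantees a map $\phi:\R^d\to\R^d$ whose Jacobian satisfies $D\phi=\sigma^{-1}$; uniform ellipticity $(S2)$ together with the boundedness $(S1)$ make $D\phi$ everywhere invertible with bounded inverse $\sigma$, so that (by Proposition \ref{transformation}) $\phi$ is a global diffeomorphism.

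First I would apply the rough change-of-variables formula (the rough It\^o formula developed for this purpose) to the controlled rough path $Y_t:=\phi(X_t)$. For the geometric lift $\mathbf{B}^H$ the chain rule carries no second-order correction, so
\[
Y_t = \phi(x_0) + \int_0^t D\phi(X_s)\,dX_s = \phi(x_0) + \int_0^t \sigma(X_s)^{-1}f(X_s)\,ds + \int_0^t \sigma(X_s)^{-1}\sigma(X_s)\,d\mathbf{B}^H_s,
\]
and since $\sigma^{-1}\sigma = I_d$ the last integral collapses to $B^H_t$. Writing $X_s=\phi^{-1}(Y_s)$ and setting $g:=(\sigma^{-1}f)\circ\phi^{-1}$, this becomes
\[
Y_t = \phi(x_0) + \int_0^t g(Y_s)\,ds + B^H_t,
\]
which is precisely \eqref{modelSDEs1} with drift $g$ and initial datum $\phi(x_0)$. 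The drift $g$ is bounded (by $(F1)$ and the ellipticity bound $\|\sigma^{-1}\|_\infty\le\lambda^{-1}$ coming from $(S2)$) and continuous, as required by \cite{CG}.

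Next I would set up the bijection between solution sets. Because $\phi$ is a global diffeomorphism, $(X,X')=(X,\sigma(X))$ solves \eqref{modelRDE} if and only if $(Y,Y')=(\phi(X),I_d)$ solves the additive equation: the forward map uses $Y'=D\phi(X)X'=\sigma^{-1}(X)\sigma(X)=I_d$, the inverse map $X=\phi^{-1}(Y)$ is obtained by applying the change of variables with $\phi^{-1}$, and for additive noise every continuous solution of the integral equation is automatically a controlled rough path with Gubinelli derivative $I_d$, and conversely. Catellier--Gubinelli \cite{CG} supply a null set $\mathcal{N}$ depending only on $g$ and the starting point $\phi(x_0)$ --- hence, through $\phi$, only on $f,\sigma,x_0$ --- off which the $Y$-equation has a unique solution. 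Transporting this uniqueness through $\phi^{-1}$ yields uniqueness of the solution to \eqref{modelRDE} for every $\omega\notin\mathcal{N}$, which is exactly the path-by-path uniqueness of Definition \ref{pbypeu}.

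The main obstacle I anticipate is making the two-way correspondence of solutions fully rigorous at the level of controlled rough paths: one must verify that the rough change of variables maps fixed points of $\Psi$ to solutions of the additive equation and back without gaining or losing solutions, that the collapse of $\int \sigma^{-1}(X)\sigma(X)\,d\mathbf{B}^H$ to $B^H$ holds exactly and is insensitive to the chosen geometric lift (as flagged after Lemma \ref{rough-path-fbm}), and that the transformed drift $g$ lands in the precise regularity class for which \cite{CG} establishes path-by-path uniqueness throughout $H\in(\tfrac13,\tfrac12]$. Once Proposition \ref{transformation} and the rough It\^o formula are in hand these reduce largely to bookkeeping, but the equivalence of the two solution concepts is where care is needed.
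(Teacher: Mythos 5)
Your proposal is correct and follows essentially the same route as the paper: the authors likewise use the diffeomorphism $G$ with $DG=\sigma^{-1}$ from Proposition \ref{transformation} (whose proof contains exactly the rough It\^o/change-of-variables computation you sketch) to pass to the additive equation \eqref{unit-sigma}, invoke Catellier--Gubinelli (and Davie for $H=\tfrac12$) for path-by-path uniqueness of the transformed equation, and pull the solution correspondence back through $G$, with existence supplied by a Schauder fixed-point argument. The ``bookkeeping'' you flag about the two-way equivalence of solution concepts is precisely what Proposition \ref{transformation} is stated to provide, so no gap remains.
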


Now we restrict ourselves to the case of $H= \frac{1}{2}$ and
$d=1$. In this case, we consider the choice of rough path $\mathbf{B}= \mathbf{B}^{Ito}$ or $\mathbf{B} = \mathbf{B}^{Strat}$. Our second main result is about the smoothness of the flow
associated to equation \eqref{modelRDE}. We first define the meaning of the flow associated to \eqref{modelRDE} as follows. With a slight abuse of notation, we write $\psi(s,t,x)$ to mean both as a real valued function in $s,t,x$ or a controlled rough path  $(\psi(s,t,x),\sigma(\psi(s,t,x))$ in variable $t$.

\begin{definition} \label{just-flow} Let $x\in\R$ and $0 \leq s
  \leq t < T$.   The stochastic flow $\psi(s,t,x)\equiv \psi(s,t,x,\omega)$ of the equation \eqref{modelRDE} defined on $\{0\leq s \leq t < T\}\times\R\times\Omega$ is a jointly measurable collection of random variables on $(\Omega, {\cal F}, \P)$ satisfying:
\begin{enumerate}
\item Almost surely, the map $(s,t,x) \mapsto \psi(s,t,x)$ is jointly
  continuous in $s,t,x$ .
\item Almost surely for all $x$ and $s\leq u \leq t$, $\psi(s,t,x) = \psi(u,t, \psi(s,u,x))$ and $\psi(s,s,x) = x$.
\item Almost surely for all $s, t, x$, $\psi(s,t,x)$  satisfies 
\[ \psi(s,t,x) = x + \int_s^t f(\psi(s,r,x))dr + \int_s^t \sigma(\psi(s,r,x))d\mathbf{B}_r.\]
\item For each $s,x$, the process $t\mapsto \psi(s,t,x)$ is adapted to the filtration of process  $t \mapsto B_t - B_s$.
\end{enumerate}

\end{definition}
Our first result is on establishing smoothness of the one dimensional flow.
\begin{theorem}\label{smooth-flow}
Let $d=1$ and $H= \frac{1}{2}$. Assume (F1), (S1)-(S2). Further assume that $f: \mathbb{R} \to \mathbb{R}$ is $\theta$-H\"older for some $\theta \in (0,1)$, i.e. 
 \[||f||_{\theta} := \sup_{x \neq y} \frac{|f(x) - f(y)|}{|x-y|^{\theta}} < \infty.\]
Then,
\begin{enumerate}
  \item  There exists a flow $\psi(s,t,x)$ satisfying
    Definition \ref{just-flow}.
    
 \item The flow $\psi(s,t,x)$ from Definition
   \ref{just-flow} is almost surely differentiable in $x$ for all $s,t,x$ and the map $
   D\psi(s,t,x) = \frac{d}{dx}\psi(s,t,x)$ is locally $\eta$-H\"older
   continuous for all $\eta \in [0, \frac{\theta}{2})$, i.e. for all
     compact sets $K \subset \{0\leq s \leq t < T\}\times \R$,
   \[\sup_{p,q \in K, p \neq q} \frac{|D\psi(p) - D\psi(q)|}{|p-q|^{\eta}} < \infty.\]

\end{enumerate}
\end{theorem}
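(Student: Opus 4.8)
The plan is to reduce \eqref{modelRDE} to an equation with additive noise, write down the exponential identity \eqref{derivative-identity} for the derivative of a mollified flow, eliminate the distributional derivative appearing in it by an It\^o-formula computation, and finally read off the H\"older regularity from a Kolmogorov argument. First I would invoke Proposition \ref{transformation}: the $C^2$ diffeomorphism $\phi$ with $\phi' = 1/\sigma$ conjugates the equation (for $d=1$, $H=\tfrac12$, with $\mathbf{B}^{Ito}$ or $\mathbf{B}^{Strat}$) to $Y_t = \phi(x) + \int_s^t \tilde f(Y_r)\,dr + (B_t-B_s)$, where $\tilde f = f/\sigma - \sigma'/2$. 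Since $\sigma\in C^2$ is bounded away from $0$ by (S2), both $\phi,\phi^{-1}\in C^2$ and $\tilde f$ is again bounded and $\theta$-H\"older. The flows are conjugate, $\psi(s,t,x) = \phi^{-1}(\psi^Y(s,t,\phi(x)))$, so by the chain rule $D\psi$ exists and is $\eta$-H\"older iff $D\psi^Y$ is; hence it suffices to treat $\sigma\equiv 1$. Existence of $\psi^Y$ in the sense of Definition \ref{just-flow} comes from \cite{Flandoli-Fedrizzi} for compactly supported drift, the general bounded case following by localization. As $H=\tfrac12$ and $d=1$, the rough integral agrees with the It\^o (resp. Stratonovich) integral, so classical stochastic calculus is available throughout.

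Next I would mollify, $\tilde f_n := \tilde f * \rho_n$, and let $\psi_n$ be the classical $C^1$ flow of the mollified equation. Differentiating the linearized flow equation yields the exponential identity \eqref{derivative-identity},
\[
D\psi_n(s,t,x) = \exp\Big(\int_s^t \tilde f_n'(\psi_n(s,r,x))\,dr\Big) =: \exp\big(A_n(s,t,x)\big).
\]
The exponent contains $\tilde f_n'$, which has no limit as $n\to\infty$; to remove it I apply It\^o's formula to $F_n(\psi_n(s,\cdot,x))$ with $F_n' = \tilde f_n$, using $d\psi_n = \tilde f_n(\psi_n)\,dr + dB$, to obtain the representation of Proposition \ref{exp-identity},
\[
A_n(s,t,x) = 2\Big[F_n(\psi_n(s,t,x)) - F_n(x) - \int_s^t \tilde f_n(\psi_n(s,r,x))^2\,dr - \int_s^t \tilde f_n(\psi_n(s,r,x))\,dB_r\Big].
\]
Every term now involves only $\tilde f_n$, not its derivative. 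Letting $n\to\infty$, with $\tilde f_n\to\tilde f$ and $F_n\to F$ (where $F'=\tilde f$, so $F\in C^{1,\theta}$) locally uniformly, $\psi_n\to\psi$, and the It\^o isometry/BDG for the martingale term, gives the limiting identity for $A$ with $\tilde f_n,F_n,\psi_n$ replaced by $\tilde f,F,\psi$.

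The crux is the uniform joint H\"older regularity of $A_n$. A uniform sup-bound $\sup_n\sup_K|A_n|<\infty$ (a.s. and in every $L^p$), read off from the representation using boundedness of $\tilde f_n,F_n$ and BDG, already forces $\sup_n\|D\psi_n\|_{\infty,K} = \sup_n\|e^{A_n}\|_{\infty,K}<\infty$, i.e. a uniform Lipschitz-in-$x$ bound on the flows. With this I would prove, uniformly in $n$, a moment bound
\[
\big\|A_n(p)-A_n(q)\big\|_{L^{2m}} \leq C_{m,K}\,|p-q|^{\theta/2}, \qquad p,q\in K,
\]
for $p=(s,t,x)$ over a compact $K\subset\{0\le s\le t<T\}\times\R$. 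The increments in $t$ and $x$ are in fact of better order (Lipschitz, resp. order $\theta$, via $\tilde f\in C^\theta$, boundedness, BDG and the uniform Lipschitz control), so the binding constraint is the initial-time variable $s$: perturbing $s$ to $s'$ moves the initial point of the flow by $\psi(s,s',x)-x = \int_s^{s'}\tilde f(\psi)\,du + (B_{s'}-B_s)$, of size $|s-s'|^{1/2-}$ by the Brownian modulus, and $\tilde f$ registers this displacement only through its $\theta$-H\"older seminorm, producing increments of order $|s-s'|^{\theta/2}$ in both the $dr$- and $dB_r$-terms.

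Feeding this bound into Kolmogorov's continuity theorem and letting $m\to\infty$ (to absorb the three-dimensional parameter deficit) produces an a.s.\ modification of $A$ that is jointly $\eta$-H\"older on compacts for every $\eta<\theta/2$; simultaneously $\psi_n\to\psi$ in $C^1_{\mathrm{loc}}$ identifies $e^A$ as the genuine spatial derivative $D\psi$, which, $\exp$ being locally Lipschitz, inherits the $\eta$-H\"older regularity. Conjugating back through $\phi\in C^2$ (the chain rule preserves $C^\eta$ for $\eta<\theta/2<1$) and undoing the localization finishes both assertions. I expect the main obstacle to be the third step: quantitatively, passing from the distributional exponent $\int\tilde f_n'(\psi_n)\,dr$ to the stochastically-integrated representation and controlling its increments uniformly in $n$; and logically, the apparent circularity that the sharp $s$-estimate needs a Lipschitz-in-$x$ flow, which I resolve by extracting the cheaper uniform sup-bound on $A_n$ first. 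The $\theta/2$ ceiling is exactly the output of this $s$-regularity estimate, where the $\tfrac12$-H\"older regularity of Brownian motion in time meets the $\theta$-H\"older regularity of the drift.
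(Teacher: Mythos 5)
Your proposal follows essentially the same route as the paper: reduce to $\sigma\equiv 1$ via Proposition \ref{transformation}, derive an exponential identity for the spatial derivative, use It\^o's formula with the antiderivative $F$ of $f$ to replace $\int_s^t f'(\cdot)\,dr$ in the exponent by terms involving only $f$ and a stochastic integral, prove $L^p$ increment bounds of order $|p-q|^{\theta/2}$ (with the $s$-increment as the binding one, exactly as you identify), and conclude by Kolmogorov. The one place where you diverge is that you work with the one-point identity $D\psi_n=\exp\bigl(\int_s^t \tilde f_n'(\psi_n)\,dr\bigr)$ for mollified flows and must then establish $C^1_{\mathrm{loc}}$ convergence of $\psi_n$ to identify $e^{A}$ as $D\psi$; the paper instead proves a two-point identity $\phi(s,t,x)-\phi(s,t,y)=(x-y)\exp\bigl[2\int_0^1\{I^u_t-J^u_t\}\,du\bigr]$ along the convex combination $Z^u=u\phi(\cdot,x)+(1-u)\phi(\cdot,y)$ (Proposition \ref{exp-identity}(a)), passes to the mollification limit for fixed parameters, and then obtains differentiability simply by letting $y\to x$ in the jointly continuous modification, so no convergence of derivatives is ever needed. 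Your version is repairable (e.g.\ by writing $\psi_n(s,t,x)-\psi_n(s,t,y)=\int_y^x e^{A_n(s,t,z)}\,dz$ and passing to the limit), but the two-point formulation is what makes the differentiability step immediate. Also, the a.s.\ uniform sup-bound on $A_n$ that you invoke to break the circularity is more than is needed and is itself not straightforward to get first; the paper's resolution is a pointwise-in-parameters exponential moment bound on the exponent (via Dambis--Dubins--Schwarz and Fernique, see \eqref{exp-martingale-bound}), which suffices to derive the Lipschitz-in-$x$ moment estimate \eqref{Lp-bound} that then feeds the $\theta/2$ estimate for the stochastic-integral term.
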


As compared to various definitions of stochastic flows presented in
standard literature, Definition \ref{just-flow} is a slightly different
notion suitable for our purposes. For example, definition of a flow as
defined in \cite{Flandoli-Fedrizzi} (see Definition $5.1$ in there)
requires the flow to be a homeomorphism of the state space. We have
relaxed on this requirement in Definiton \ref{just-flow} and we will
distinguish between the \textit{flow} and \textit{flow of
  homeomorphism}. However, after establishing the above result we can
show that the homeomorphism property follows automatically from the
regularity properties of the flow. We summarise it as a corollary.

\begin{corollary} \label{likeflandoli} The field $\psi$ constructed above is also a flow of diffeomorphism. More precisely, almost surely for all $s, t$, the map $x \mapsto \psi(s,t, x)$ is a bijective continuously differentiable map with a continuously differentiable inverse. 
\end{corollary}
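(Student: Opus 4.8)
The plan is to upgrade the $C^1$-regularity already furnished by Theorem \ref{smooth-flow} to the diffeomorphism property by means of the one-dimensional inverse function theorem, which requires only that $x \mapsto \psi(s,t,x)$ be a bijection with nonvanishing derivative. First I would record that, on the full-measure event on which the conclusions of Theorem \ref{smooth-flow} hold, the map $x \mapsto \psi(s,t,x)$ is continuously differentiable for all $0 \le s \le t < T$, since part (2) of that theorem gives differentiability together with local $\eta$-H\"older continuity (hence continuity) of $D\psi$. The first genuine input is the strict positivity of this derivative: invoking the exponential identity \eqref{derivative-identity} and the continuous modifications produced in Proposition \ref{exp-identity}, $D\psi(s,t,x)$ is the exponential of a jointly continuous quantity and is therefore strictly positive, simultaneously for all $s,t,x$ off a null set; the same conclusion also follows from continuity of $t \mapsto D\psi(s,t,x)$ together with the initial value $D\psi(s,s,x)=1$ coming from $\psi(s,s,x)=x$ in Definition \ref{just-flow}, since a nonvanishing continuous derivative starting at $1$ cannot change sign. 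Consequently $x \mapsto \psi(s,t,x)$ is strictly increasing, hence injective.

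It remains to prove surjectivity onto $\R$, which I expect to be the main obstacle, because the rough integral term in \eqref{modelRDE} depends on the initial point $x$ in a way that resists a direct uniform growth estimate. Here I would pass through the bijective diffeomorphism $\phi$ of Proposition \ref{transformation}, which conjugates the flow $\psi$ to the flow $\tilde\psi$ of an equation with $\sigma \equiv 1$ (additive noise), via $\psi(s,t,x) = \phi^{-1}\bigl(\tilde\psi(s,t,\phi(x))\bigr)$. For the additive-noise flow the driving term $B_t-B_s$ is independent of the initial point, so for $y_1 < y_2$ the increment satisfies $\tilde\psi(s,t,y_2) - \tilde\psi(s,t,y_1) = (y_2 - y_1) + \int_s^t [\tilde f(\tilde\psi(s,r,y_2)) - \tilde f(\tilde\psi(s,r,y_1))]\,dr$. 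Since $f$ is bounded and $\sigma$ is bounded below by ellipticity (S2) with bounded derivative (S1), the transformed drift $\tilde f$ is bounded, yielding $\tilde\psi(s,t,y_2) - \tilde\psi(s,t,y_1) \ge (y_2-y_1) - 2\|\tilde f\|_\infty (t-s)$. Letting $y_2 \to +\infty$ and $y_1 \to -\infty$ forces $\tilde\psi(s,t,\cdot)$ to exhaust $\R$, and because $\phi$ is a bijection of $\R$ onto $\R$ this surjectivity transfers back to $\psi(s,t,\cdot)$.

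With bijectivity and a strictly positive continuous derivative established, the one-dimensional inverse function theorem shows that $\psi(s,t,\cdot)^{-1}$ is continuously differentiable, with $(\psi(s,t,\cdot)^{-1})'(y) = 1/D\psi\bigl(s,t,\psi(s,t,\cdot)^{-1}(y)\bigr)$, so $x \mapsto \psi(s,t,x)$ is a $C^1$-diffeomorphism of $\R$. Finally I would verify that a single null set serves for all $s,t$: positivity of $D\psi$, the growth lower bound, and the $C^1$-regularity all hold simultaneously for all $(s,t,x)$ on the common full-measure event provided by Theorem \ref{smooth-flow}, Proposition \ref{exp-identity}, and the joint continuity in Definition \ref{just-flow}, so the stated almost-sure-for-all-$s,t$ uniformity is automatic.
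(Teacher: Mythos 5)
Your proposal is correct and rests on exactly the two inputs the paper uses — non-vanishing of $D\psi$ (via the exponential identity \eqref{derivative-identity}) and properness, i.e.\ $|\psi(s,t,x)|\to\infty$ as $|x|\to\infty$ — the only difference being that the paper concludes by citing the Hadamard global inverse theorem (Lemma \ref{hadamard}) while you carry out its one-dimensional content by hand (strict monotonicity plus the intermediate value theorem), and you spell out the properness through the conjugation by $G$ and boundedness of $\tilde f$, which the paper leaves implicit. This is essentially the same argument.
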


Our main result is a consequence of properties of the flow established above. This enables us to choose a null set independent of the starting point such that \eqref{modelRDE} has a path-by-path unique
solution.  

\begin{theorem} \label{pbypins} Let $d=1, H=\frac{1}{2}, x_0\in \R$. Assume (F1), (S1)-(S2). Further assume that $f$ is also $\theta$-H\"older for some $0 < \theta <1.$ Then the equation \eqref{modelRDE} has a path-by-path unique
solution with the null set $\mathcal{N}$ (as in Definition
\ref{pbypeu}) independent of $x_0\in \R$.
\end{theorem}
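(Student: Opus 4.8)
The plan is to leverage the smooth flow of diffeomorphisms furnished by Theorem \ref{smooth-flow} and Corollary \ref{likeflandoli}, whose defining null set is already independent of the starting point, and to show that on its complement \emph{every} solution of \eqref{modelRDE} must coincide with the flow started at $x_0$. The decisive simplification comes from Proposition \ref{transformation}: applying the bijective diffeomorphism (which depends only on $\sigma$, hence not on $x_0$) transforms \eqref{modelRDE} into an equation with $\sigma \equiv 1$, i.e.\ with \emph{additive} noise. Since this map is a bijection carrying controlled-rough-path solutions to controlled-rough-path solutions, and its inverse does the same, path-by-path uniqueness for the original equation is equivalent to path-by-path uniqueness for the transformed one with the same null set. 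It therefore suffices to prove the statement when $\sigma \equiv 1$, and I will work in these coordinates, writing $g$ for the transformed bounded continuous drift and $\Phi_t(\cdot)$ for the flow of the transformed equation, which inherits from Theorem \ref{smooth-flow} and Corollary \ref{likeflandoli} the property of being a jointly continuous, strictly increasing $C^1$-diffeomorphism of $\R$ with $C^1$ inverse and strictly positive spatial derivative $\partial_v \Phi_t(v) > 0$.

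First I would fix $\mathcal{N}$ to be the union of the null sets outside which the conclusions of Lemma \ref{rough-path-fbm}, Theorem \ref{smooth-flow} and Corollary \ref{likeflandoli} hold; each is constructed simultaneously in the spatial variable, so $\mathcal{N}$ does not depend on $x_0$. Existence on $\Omega \setminus \mathcal{N}$ is then immediate: by item (3) of Definition \ref{just-flow} the curve $t \mapsto \Phi_t(x_0)$ solves the transformed equation for every $x_0$. The entire content is thus the uniqueness claim: for $\omega \notin \mathcal{N}$ and any solution $Y$ with $Y_0 = x_0$, one has $Y_t = \Phi_t(x_0)$ for all $t$.

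To prove this I would exploit that when $\sigma \equiv 1$ the rough integral reduces to the increment $B_t$, so that both the solution and the flow split off the \emph{same} rough part: $Y_t = x_0 + \int_0^t g(Y_s)\,ds + B_t$ and $\Phi_t(v) = \Gamma_t(v) + B_t$, where $\Gamma_t(v) := v + \int_0^t g(\Phi_s(v))\,ds$. Because $g$ is bounded continuous and $\Phi$ is jointly continuous, $\Gamma_t(v)$ is jointly continuous, is $C^1$ in $t$ with $\partial_t \Gamma_t(v) = g(\Phi_t(v))$, and is $C^1$ in $v$ with $\partial_v \Gamma_t(v) = \partial_v \Phi_t(v) > 0$; likewise $R_t := Y_t - B_t = x_0 + \int_0^t g(Y_s)\,ds$ is $C^1$ in $t$. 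Setting $V_t := \Phi_t^{-1}(Y_t)$, the identity $\Gamma_t(V_t) = R_t$ together with $\partial_v \Gamma_t > 0$ and the implicit function theorem shows $V_t = \Gamma_t^{-1}(R_t)$ is $C^1$ in $t$; differentiating and using $\partial_t \Gamma_t(V_t) = g(\Phi_t(V_t)) = g(Y_t) = \dot R_t$ yields $\partial_v \Gamma_t(V_t)\,\dot V_t = 0$, hence $\dot V_t \equiv 0$. Since $V_0 = \Phi_0^{-1}(Y_0) = x_0$, this gives $V_t \equiv x_0$ and therefore $Y_t = \Phi_t(x_0)$, which is the desired uniqueness; transporting back through the inverse diffeomorphism completes the proof with a null set independent of $x_0$.

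The step I expect to be the main obstacle is not the final calculus but the reductions that make it legitimate: verifying that an \emph{arbitrary} controlled-rough-path solution of \eqref{modelRDE} is carried by the transformation to a genuine solution of the additive equation, so that the cancellation of the rough part is exact and $R_t$ is honestly $C^1$, and that the flow transported through the transformation truly inherits joint continuity, $C^1$-regularity in $x$ with strictly positive derivative, and bijectivity, as required to invert $\Gamma_t$ and apply the implicit function theorem. These rest on the rough chain rule underlying Proposition \ref{transformation} and on the regularity established in Theorem \ref{smooth-flow} and Corollary \ref{likeflandoli}; once they are in place, the conjugation argument above is elementary and, crucially, invokes only the flow's own null set, which does not depend on $x_0$.
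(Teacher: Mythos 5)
Your proposal is correct in substance but the uniqueness mechanism is genuinely different from the paper's. Both arguments share the same skeleton --- reduce to $\sigma\equiv 1$ via Proposition \ref{transformation} and then force an arbitrary solution to agree with the flow of Theorem \ref{smooth-flow}, whose null set is already independent of $x_0$ --- but the paper \emph{pushes forward} with the two-parameter flow, setting $L_r=\psi(r,t,X_r)-\psi(0,t,x)$ for a fixed terminal time $t$, and shows $\frac{dL_r}{dr}=0$ using only the locally uniform Lipschitz bound $\|\psi\|_{1,R}<\infty$ in the space variable, the composition property $\psi(u,t,\cdot)=\psi(r,t,\psi(u,r,\cdot))$, and the elementary estimate $|X_r-\psi(u,r,X_u)|\le\int_u^r|f(X_\theta)-f(\psi(u,\theta,X_u))|\,d\theta=o(r-u)$. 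You instead \emph{pull back} through the one-parameter flow, $V_t=\Phi_t^{-1}(Y_t)$, exploit the exact cancellation of the additive noise to write $\Gamma_t(V_t)=R_t$ with both sides $C^1$ in $t$, and deduce $\dot V\equiv 0$ from the implicit function theorem and the chain rule. Your route buys an exact identity rather than a difference-quotient estimate, but it needs strictly more from the flow: bijectivity and strict positivity of $D\psi(0,t,\cdot)$ (Corollary \ref{likeflandoli} plus $D\psi(s,s,\cdot)=1$ and continuity of $D\psi$, or the exponential identity \eqref{derivative-identity}), whereas the paper's argument requires only local Lipschitz continuity in $x$ and never inverts the flow. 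Three small points you should make explicit to close the argument: (i) for $d=1$ condition (S3) is automatic, so Proposition \ref{transformation} applies under (S1)--(S2) alone; (ii) the transformed drift $\tilde f=\sigma(G^{-1}(\cdot))^{-1}f(G^{-1}(\cdot))$ is still bounded and $\theta$-H\"older (composition with the Lipschitz map $G^{-1}$ and multiplication by a bounded Lipschitz factor), which is needed before invoking Theorem \ref{smooth-flow} in the new coordinates; and (iii) if the It\^o lift $\mathbf{B}^{Ito}$ is used one must first pass to $\mathbf{B}^{Strat}$ via the correction \eqref{correction-term}, as the paper notes. None of these is a gap in the idea, only in the bookkeeping.
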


\begin{remark} \label{s-rem} We conclude this section with some remarks on the results presented above.
  \begin{itemize}
\item {\bf Rough Differential Equations:} Theorem \ref{gen-catellier-gubinelli} should also be compared to Theorem 9.1 of \cite{FH12} where it is shown
that the RDEs of form (\ref{modelSDE}) has a unique solution when $\sigma$ is continuously thrice differentiable and
$f$ is Lipschitz continuous. The apparent improvement in Theorem \ref{gen-catellier-gubinelli} is due to the fact that Theorem $9.1$ of \cite{FH12} is a result about general rough paths and regularizing effects of the underlying process is not accounted for. Theorem \ref{gen-catellier-gubinelli} however is very specific to fractional Brownian motions and would not hold for general RDEs.

  \item{\bf Strong Solution:}
    As indicated earlier the path by path uniqueness can be viewed as  regularizing effect of Brownian motion. This has been explored earlier
using It\^o Calculus by Veretennikov \cite{V81} for more general
$\sigma$, where a pathwise unique solution was established. We note that the solution that arises from Theorem \ref{pbypins} above will also be a strong solution and will be adapted to the filtration that governs the Brownian motion. This is the only point where we use the completeness of the Probability space stated in Definition \ref{d-fbm}.
\item {\bf Classical Time Change:} Our method of proof was motivated by the well known time-change idea. We present this  by considering a variant of the equation \eqref{modelSDE} in $d=1, H=\frac{1}{2}$ given by \eqref{timechange} below. While considering the weak uniqueness of solution (uniqueness in law) to \eqref{modelSDE}, one can use a martingale embedding theorem due to Dambis-Dubins-Schwarz which states that there exists a Brownian motion $W$ (called the DDS Brownian motion which depends on the solution $X$) such that 
\[ \int_0^t \sigma(X_r)dB_r = W_{\int_0^t \sigma^2(X_r)dr},\]

Thus, for establishing the uniqueness in law of solution $X$, one can consider the equation 
\begin{equation}\label{timechange} X_t = x_0 + \int_0^t f(X_r)dr + W_{\int_0^t \sigma^2(X_r)dr}
\end{equation}

where $W$ is a given standard Brownian motion. Thus equation
\eqref{timechange} is viewed as a time-changed equation of
\eqref{modelSDE}. The existence of a solution to \eqref{timechange} can be easily established by an application of
Schauder's fixed point theorem. For uniqueness, introduce 
\[ Z_t = \int_0^t \sigma^2(X_r)dr.\]
Since $|\sigma(\cdot)|\geq \lambda > 0$, $Z$ is a strictly increasing $C^1$ curve, $Y_s  = Z^{-1}(s)$ exists and is a $C^1$ curve with  
\[ \dot{Y}(Z(t)) = \frac{1}{\dot{Z}_t} = \frac{1}{\sigma^2(X_t)}\implies \dot{Y}_t = \frac{1}{\sigma^2(X_{Y_t})}.\]
Now look at $K_t = X_{Y_t}$. Then it follows that  
\[ K_t = x_0 + \int_0^t \frac{f(K_r)}{\sigma^2(K_r)}dr + W_t\]
Now appyling Davie's Theorem \cite{davie}, curve $K$ is uniquely determined by $W, f, x_0$ and $\sigma^2$. Also note that $X_t= K_{Z_t}$ and $\dot{Z}_t = \sigma^2(K_{Z_t})$ which implies   
\[ \int_0^{Z_t} \frac{dr}{\sigma^2(K_r)} = t.\]
Thus $Z_t$ is uniquely determined from $K_t$ and so is $X_t = K_{Z_t}$ implying the uniqueness of solution $X$. We note that changing \eqref{modelSDE} to \eqref{timechange} is a
probabilistic transformation and thus not suited to study path-by-path
uniqueness of \eqref{modelSDE}.
\end{itemize}
\end{remark}

{\bf Layout for the rest of the article:} In the next Section we prove
a rough path It\^o formula and a reduction to $\sigma \equiv 1$. This
is made precise in Proposition \ref{transformation}.  In Section
\ref{flowidentity}, we establish the flow Identity when $H =
\frac{1}{2}, d=1$, $\sigma(\cdot) \equiv 1$, $f$ compactly supported
in Proposition \ref{exp-identity}. With all the tools prepared we
prove the main results in Section \ref{pmrs}. We conclude the article
with an appendix section where we present preliminaries on rough paths
and provide a direct proof of path by path existence via an
application of Schauder's fixed point theorem.

{\bf Acknowledgements:}We would like to thank Leonid Mytnik for
various discussions during this project. S.A and A.S research were
supported in part by an ISF-UGC Grant. The second author was supported by NBHM (National Board for Higher Mathematics, Department of Atomic Energy, Government of India) Post Doctoral Fellowship.

\section{Rough It\^o Formula and Reduction to $\sigma \equiv 1$}\label{rough-Ito-formula}

The main result of this section is the following proposition.

\begin{proposition}\label{transformation} Assume (F1),(S1)-(S3). Then there exists a diffeomorphism
  $G:\R^d \to \R^d$ such that $DG(z) = \sigma(z)^{-1}$ and $(X, \sigma(X))$ is a solution to \eqref{modelRDE} if and only if $Z_t = G(X_t)$ satisfies 
\begin{equation} \label{unit-sigma} Z_t = G(x) + \int_0^t \tilde{f}(Z_r)dr + B^H_t,
\end{equation}
where $\tilde{f}: \R^d \rightarrow \R^d$ is given by $\tilde{f}(z)= DG(G^{-1}(z))f(G^{-1}(z))$ is a bounded continuous function.
\end{proposition}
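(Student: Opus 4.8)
The plan is to construct $G$ explicitly from the potential guaranteed by (S3), verify it is a global diffeomorphism, and then move between \eqref{modelRDE} and \eqref{unit-sigma} by a rough-path change of variables applied to $G$ and $G^{-1}$.

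First I would build $G$. Assumption (S3) says precisely that the matrix-valued one-form $\sigma(\cdot)^{-1}dx$ is exact on the simply connected domain $\R^d$, so fixing a base point $z_0$ and setting $G(z) := \int_{\gamma} \sigma(x)^{-1}dx$ along any path $\gamma$ from $z_0$ to $z$ gives a well-defined map with $DG(z) = \sigma(z)^{-1}$. By (S1), $\sigma \in C^2$ with bounded derivatives, and by (S2), $\sigma$ is invertible with $\|\sigma(z)^{-1}\| \le \lambda^{-1}$ (from $|v^{\intercal}\sigma v|\ge\lambda\|v\|^2$ and Cauchy--Schwarz); the identity $D(\sigma^{-1}) = -\sigma^{-1}(D\sigma)\sigma^{-1}$ then shows $\sigma^{-1}\in C^2$ with bounded derivatives, hence $G\in C^3$. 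The Jacobian $DG(z)=\sigma(z)^{-1}$ is invertible everywhere, and its inverse $DG(z)^{-1}=\sigma(z)$ satisfies $\sup_z\|DG(z)^{-1}\|\le\|\sigma\|_\infty<\infty$. The Hadamard--L\'evy global inverse function theorem then guarantees that $G$ is a $C^3$-diffeomorphism of $\R^d$ onto $\R^d$, with $G^{-1}$ again $C^3$. That $\tilde f(z)=\sigma(G^{-1}(z))^{-1}f(G^{-1}(z))$ is bounded and continuous is now immediate, since $\|\sigma^{-1}\|\le\lambda^{-1}$ and $\|f\|_\infty<\infty$ give $\|\tilde f\|_\infty\le\lambda^{-1}\|f\|_\infty$, while $\sigma^{-1}$, $f$ and $G^{-1}$ are all continuous.

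The heart of the argument is a rough change-of-variables (It\^o) formula, which I would establish in the following form: for the fixed geometric rough path $\mathbf{B}^H$, any controlled rough path $X$ with decomposition $dX=f(X)\,dr+\sigma(X)\,d\mathbf{B}^H$ and any $\Phi\in C^3$, the composition $\Phi(X)$ is again a controlled rough path and satisfies
\[ \Phi(X_t) = \Phi(X_0) + \int_0^t D\Phi(X_r) f(X_r)\,dr + \int_0^t D\Phi(X_r)\sigma(X_r)\,d\mathbf{B}^H_r. \]
Applying this with $\Phi=G$ and using $DG(X_r)\sigma(X_r)=\sigma(X_r)^{-1}\sigma(X_r)=I_d$ together with $\int_0^t I_d\,d\mathbf{B}^H_r=B^H_t$, the rough integral collapses to the increment of $B^H$, while the drift becomes $\int_0^t \sigma(X_r)^{-1}f(X_r)\,dr$; substituting $X_r=G^{-1}(Z_r)$ turns the integrand into $\tilde f(Z_r)$ and yields \eqref{unit-sigma} with $Z_0=G(X_0)=G(x_0)$. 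For the converse I would run the same formula with $\Phi=G^{-1}$ applied to $Z$, which solves $dZ=\tilde f(Z)\,dr+d\mathbf{B}^H$ with Gubinelli derivative $I_d$; since $DG^{-1}(Z)=\sigma(G^{-1}(Z))=\sigma(X)$, one gets $DG^{-1}(Z)\tilde f(Z)=f(X)$ and $DG^{-1}(Z)\,d\mathbf{B}^H=\sigma(X)\,d\mathbf{B}^H$, recovering \eqref{modelRDE}. Thus $X\leftrightarrow Z=G(X)$ is a genuine equivalence, the two directions being symmetric under $G\leftrightarrow G^{-1}$.

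The main obstacle is the rough It\^o formula itself, specifically justifying the substitution of the equation for $X$ inside the rough integral $\int D\Phi(X)\,d\mathbf{X}$: one must check that the level-two data of the solution $X$ is correctly expressed through $\mathbb{B}^H$ via the Gubinelli derivative $X'=\sigma(X)$, so that the compensated Riemann sums defining the integral rearrange into the claimed Lebesgue and $\mathbf{B}^H$-rough integrals. This is exactly where $G\in C^3$ (controlling the third-order Taylor remainder, which is summable for $\alpha>\tfrac13$) and the geometricity of $\mathbf{B}^H$ (which kills the second-order correction, so that no spurious drift appears) enter. I would also note that for a non-geometric lift such as $\mathbf{B}^{Ito}$ the identical computation produces an additional \emph{bounded} drift correction, so that \eqref{unit-sigma} still holds with a modified but bounded continuous $\tilde f$; the clean form stated in the proposition corresponds to the geometric choice of $\mathbf{B}^H$.
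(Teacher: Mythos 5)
Your proposal is correct and follows essentially the same route as the paper: $G$ is built as the potential of $\sigma^{-1}$ guaranteed by (S3), shown to be a global $C^3$-diffeomorphism via Hadamard's global inverse theorem, and the equivalence of \eqref{modelRDE} and \eqref{unit-sigma} is obtained from a rough change-of-variables formula that the paper packages as Lemmas \ref{geometric rough path}, \ref{integration-by-parts} and \ref{compatible} (lifting $X$ to a geometric rough path $\mathbf{X}$, integrating by parts, and rewriting $\int DG(X)\,d\mathbf{X}$ against $dr$ and $d\mathbf{B}^H$). The only, harmless, divergence is in verifying Hadamard's hypothesis: you invoke the Hadamard--L\'evy form using $\sup_z\|DG(z)^{-1}\|=\sup_z\|\sigma(z)\|\le\|\sigma\|_\infty$, whereas the paper proves properness of $G$ directly from uniform ellipticity by integrating $v^{\intercal}DG(\theta v)v\ge\lambda'\|v\|^2$ along rays; both are standard, and your variant is marginally shorter.
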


 We shall imitate the argument mentioned in Remark \ref{s-rem}  for the proof of
 Proposition \ref{transformation}.  We first record some useful
 Lemmas. The following classical result on global inverse function
 theorem is well known.

\begin{lemma}[Hadamard Global Inverse Theorem] \label{hadamard}
If $G: \R^d \to \R^d$ is continuously differentiable map such that $|G(z)| \to \infty$ as $|z| \to \infty$ and $det(D G(z)) \neq 0$ for all $z$, then $G$ is a diffeomorphism of $\R^d$. 
\end{lemma}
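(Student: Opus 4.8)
The plan is to prove Lemma \ref{hadamard} in three stages: first I would use the hypotheses to show that $G$ is a \emph{proper local $C^1$-diffeomorphism}, then invoke the topological fact that such a self-map of $\R^d$ is a covering map, and finally use the simple connectivity of $\R^d$ to conclude that this covering has a single sheet, i.e. that $G$ is a bijection; the inverse function theorem then promotes the resulting continuous bijection to a genuine $C^1$-diffeomorphism. Concretely, in the first stage, since $\det(DG(z)) \neq 0$ for every $z$, the inverse function theorem applies at each point, so $G$ is a local $C^1$-diffeomorphism — in particular an open map whose point-preimages are discrete. The hypothesis $|G(z)| \to \infty$ as $|z| \to \infty$ is exactly the assertion that $G$ is \emph{proper} (preimages of compact sets are compact). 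Surjectivity is then immediate: $G(\R^d)$ is open because $G$ is open, and closed because if $G(x_n)\to y$ then $\{G(x_n)\}$ is bounded, properness confines $\{x_n\}$ to a compact set, and a convergent subsequence $x_{n_k}\to x$ gives $G(x)=y$ by continuity; connectedness of $\R^d$ then forces $G(\R^d)=\R^d$.

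The crux is the second stage, where I would upgrade local invertibility to the covering-map structure. Fix $y \in \R^d$. By properness $G^{-1}(y)$ is compact, and by the local-homeomorphism property it is discrete, hence a finite set $\{x_1,\dots,x_k\}$. Choosing pairwise disjoint open neighborhoods $U_i \ni x_i$ on which $G$ is a homeomorphism onto an open set, I would produce an open neighborhood $V$ of $y$ with $V \subset \bigcap_i G(U_i)$ and $G^{-1}(V) \subset \bigcup_i U_i$. The second inclusion is the step that genuinely uses properness: otherwise one could extract a sequence in $G^{-1}(V)\setminus\bigcup_i U_i$ that, along the compact set furnished by properness, converges to a preimage of $y$ lying outside $\bigcup_i U_i$, a contradiction. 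For such a $V$, the set $G^{-1}(V)$ splits as the disjoint union of the open sets $U_i \cap G^{-1}(V)$, each mapped homeomorphically onto $V$, so $G$ is a covering map; its number of sheets is locally constant, hence constant, equal to $k$, on the connected base $\R^d$.

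In the third stage I would invoke that $\R^d$ is contractible, hence simply connected, so every connected covering of $\R^d$ is trivial; since the total space $\R^d$ is connected, the covering $G$ must be single-sheeted, giving $k=1$ and thus injectivity. Together with the first stage, $G$ is a continuous open bijection, hence a homeomorphism. Near each point the global inverse $G^{-1}$ coincides with the local $C^1$-inverse supplied by the inverse function theorem, so $G^{-1}$ is itself $C^1$, and $G$ is a $C^1$-diffeomorphism of $\R^d$.

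The main obstacle is the second stage, the passage from purely local invertibility to the covering-map structure: this is precisely where the global hypothesis $|G(z)|\to\infty$ (properness) must enter, and without it a local diffeomorphism of $\R^d$ can easily fail to be injective, as the planar map underlying the complex exponential $(x,y)\mapsto(e^x\cos y, e^x\sin y)$ shows. Simple connectivity of the target is equally essential — omitting it permits nontrivial covers such as $z\mapsto z^2$ on the punctured plane. For the special case $d=1$ one could bypass the topology entirely, since a continuous nonvanishing $G'$ has constant sign and hence $G$ is strictly monotone, but the covering-space route is what delivers the conclusion uniformly in all dimensions $d\geq 1$.
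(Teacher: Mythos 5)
Your proof is correct, and it is genuinely different from what the paper does: the paper offers no argument at all for this lemma, simply citing Theorem 59 of Chapter 5 in Protter's book, whereas you give a self-contained proof. Your route is the classical covering-space proof of Hadamard's global inverse function theorem, and all three stages are sound: the equivalence of the growth condition $|G(z)|\to\infty$ with properness, the openness-plus-closedness argument for surjectivity, the upgrade from proper local homeomorphism to covering map (where you correctly isolate properness as the ingredient forcing $G^{-1}(V)\subset\bigcup_i U_i$ — without it the fiber could ``leak'' preimages, as the complex exponential shows), and the trivialization of the covering via simple connectivity of the base together with connectedness of the total space, which forces the sheet number $k=1$ and hence injectivity. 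The final smoothness of $G^{-1}$ via the local inverses from the inverse function theorem is also handled correctly. What your approach buys is self-containedness at the cost of invoking covering-space theory; what the paper's citation buys is brevity. If one wanted an argument avoiding algebraic topology altogether, there are alternatives (e.g.\ lifting line segments $t\mapsto y_0+t(y-y_0)$ through $G$ and using properness to show the lift extends to $t=1$, which proves injectivity and surjectivity simultaneously), but your version is complete as stated.
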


\begin{proof} 
See Theorem $59$ of Chapter-$5$ in \cite{protter-book}.
\end{proof}
Our next lemma is an integration by parts formula.

\begin{lemma} \label{integration-by-parts} Let $\mathbf{B}^H$ be the
  geometric rough path defined above. Let $G: \R^d \rightarrow \R^d$
  be a $C^3$ function. Then for any $T >0$,
\begin{equation} G(B^H_T)= G(B^H_0) + \int_0^T DG(B^H_r)d\mathbf{B}^H_r.
\end{equation}

\end{lemma}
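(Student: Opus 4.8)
The plan is to establish this as the rough-path (first-order) It\^o formula for geometric rough paths, where the absence of a second-order correction term is precisely the manifestation of geometricity. The first step is to check that the right-hand side is well defined. Since $B^H$ is trivially a controlled rough path over itself (with Gubinelli derivative the identity) and $G \in C^3$, the composition $DG(B^H)$ is a controlled rough path with Gubinelli derivative $D^2 G(B^H)$; here the hypothesis $G \in C^3$ is exactly what is needed, as the chain rule for controlled rough paths requires the outer map $DG$ to be $C^2$ (so that $D^2 G$ is locally Lipschitz and $D^2 G(B^H)$ is $\alpha$-H\"older with a $2\alpha$-H\"older remainder). Hence the rough integral $\int_0^T DG(B^H_r)\, d\mathbf{B}^H_r$ exists via the sewing lemma, and by definition it is the limit of compensated Riemann sums
\[ \int_0^T DG(B^H_r)\, d\mathbf{B}^H_r = \lim_{|\mathcal{P}|\to 0} \sum_{[s,t]\in\mathcal{P}} \Bigl( DG(B^H_s)\, B^H_{s,t} + D^2 G(B^H_s)\, \mathbb{B}^H_{s,t} \Bigr), \]
where $B^H_{s,t} = B^H_t - B^H_s$ and the partition mesh $|\mathcal{P}|$ tends to zero.

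Next I would compare this with the second-order Taylor expansion of the increments of $G(B^H)$. For each partition interval,
\[ G(B^H_t) - G(B^H_s) = DG(B^H_s)\, B^H_{s,t} + \tfrac{1}{2} D^2 G(B^H_s)\bigl(B^H_{s,t}, B^H_{s,t}\bigr) + R_{s,t}, \]
with remainder $R_{s,t} = O(|B^H_{s,t}|^3)$ controlled uniformly using $\|D^3 G\|$ on compacts and the $\alpha$-H\"older regularity of $B^H$. Summing $R_{s,t}$ over the partition vanishes in the limit because $3\alpha > 1$ (which holds since $\alpha > \tfrac{1}{3}$), while the left-hand sides telescope to $G(B^H_T) - G(B^H_0)$.

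The crux is to identify the two expressions term by term, and this is exactly where geometricity enters. Since $\mathbf{B}^H$ is a geometric rough path, its second level satisfies the first-order shuffle relation, equivalently $\mathrm{Sym}(\mathbb{B}^H_{s,t}) = \tfrac{1}{2}\, B^H_{s,t}\otimes B^H_{s,t}$. Because each component of the Hessian $D^2 G$ is a symmetric bilinear form (Schwarz's theorem, valid as $G \in C^3$), its contraction against $\mathbb{B}^H_{s,t}$ only sees the symmetric part, so
\[ D^2 G(B^H_s)\, \mathbb{B}^H_{s,t} = D^2 G(B^H_s)\, \mathrm{Sym}(\mathbb{B}^H_{s,t}) = \tfrac{1}{2} D^2 G(B^H_s)\bigl(B^H_{s,t}, B^H_{s,t}\bigr). \]
Substituting this identity makes the compensated Riemann sum coincide, up to the negligible remainder $\sum R_{s,t}$, with the telescoping sum $\sum \bigl(G(B^H_t) - G(B^H_s)\bigr)$, and passing to the limit yields the claimed formula.

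I expect the main obstacle to be the bookkeeping in the error estimates rather than any conceptual difficulty: one must verify that the remainders from the Taylor expansion and from the convergence of the rough integral are both $o(1)$ along refining partitions, uniformly on the compact time interval, all governed by $3\alpha > 1$. An alternative route that bypasses these estimates is to use that a geometric rough path is, by definition, a limit in the $\alpha$-H\"older rough path metric of canonical lifts of smooth paths $x^n$; for smooth $x^n$ the identity $G(x^n_T) - G(x^n_0) = \int_0^T DG(x^n_r)\, dx^n_r$ is the classical fundamental theorem of calculus, and one then passes to the limit using continuity of the rough integral and of the map $Y \mapsto G(Y)$ in the rough path topology. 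I would adopt whichever is more consistent with the conventions fixed in Section \ref{a-prps}.
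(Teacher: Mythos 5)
Your proposal is correct and follows essentially the same route as the paper's proof: both express the rough integral as a limit of compensated Riemann sums, use the symmetry of the Hessian $D^2G$ together with geometricity of $\mathbf{B}^H$ to replace $D^2G(B^H_s)\mathbb{B}^H_{s,t}$ by $\tfrac{1}{2}D^2G(B^H_s)(B^H_{s,t},B^H_{s,t})$, and conclude by a second-order Taylor expansion whose $O(|B^H_{s,t}|^3)$ remainder is negligible since $\alpha>\tfrac13$. The only difference is cosmetic: the paper isolates the antisymmetric part of $\mathbb{B}^H$ and kills it by symmetry of $D^2G$, which is the same identity you state via $\mathrm{Sym}(\mathbb{B}^H_{s,t})$.
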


\begin{proof}
The equality \eqref{integration-by-parts} is an easy consequence of Proposition $2.5$ and Theorem $7.5$ in \cite{FH12}. But here we provide another direct argument. Note that since $(B^H, I)$ is a controlled rough path, $(DG(B^H),D^2G(B^H)I)$  is also a controlled rough path. For partitions $\mathcal{P}$ of $[0,T]$ with mesh size $|\mathcal{P}|$,
\begin{align*}\int_0^T DG(B^H_r)d\mathbf{B}^H_r & = \lim\limits_{|\mathcal{P}| \to 0} \sum_{[s,t]\in \mathcal{P}} DG(B^H_s)B^H_{s,t} + D^2G(B^H_{s})\mathbb{B}^H_{s,t} \\ 
& =  \lim\limits_{|\mathcal{P}| \to 0} \sum_{[s,t]\in \mathcal{P}} DG(B^H_s)B^H_{s,t} + \frac{1}{2}D^2G(B^H_{s}){B^H_{s,t}}^{\otimes 2} + \frac{1}{2} D^2G(B^H_{s})Anti(\mathbb{B}^H_{s,t}).
\end{align*}
Note that $D^2G$ is a symmetric operator and thus
$D^2G(B^H_{s})Anti(\mathbb{B}^H_{s,t}) = 0$. Also, since $G$ is $C^3$ function, by Taylor's expansion formula, 
\[G(B_t^H) - G(B_s^H) - DG(B^H_s)B^H_{s,t} - \frac{1}{2}D^2G(B^H_{s}){B^H_{s,t}}^{\otimes 2} = O (|B_{s,t}^H |^3)\]
Since $B^H$ is $\alpha$-H\"older for $\alpha > \frac{1}{3}$, it then follows that $$\lim\limits_{|\mathcal{P}| \to 0} \sum_{[s,t]\in \mathcal{P}} DG(B^H_s)B^H_{s,t} + \frac{1}{2}D^2G(B^H_{s}){B^H_{s,t}}^{\otimes 2} = G(B_T^H) - G(B_0^H).$$ This completes the proof. 
\end{proof}

Any solution $X$ of \eqref{modelRDE} can also be naturally lifted to a
rough path. Let $i : \mathbb{R}^d \to \mathbb{R}^d$ be the identity
map $i(x) = x$ and $\eta = i \otimes \sigma$. This means $\eta(x) = (
i \otimes \sigma)(x):\mathbb{R}^d \to \mathbb{R}^d\otimes
\mathbb{R}^d$ is a linear map defined by $\eta(x)y := x \otimes
(\sigma(x)y)$. Note that since $(X, \sigma(X))$ is a controlled rough
path, $(\eta(X), D\eta(X)\sigma(X))$ is also a controlled rough
path. Define $\mathbf{X} = (X, \mathbb{X}) :
            [0,T]\times [0,T] \rightarrow \mathbb{R}^d \oplus
            (\mathbb{R}^d\otimes \mathbb{R}^d)$ by
\begin{eqnarray}
&&  \mathbf{X}_{s,t}:= (X_{s,t},\mathbb{X}_{s,t}) \mbox{ with } X_{s,t}
  = X_t - X_s \mbox{ and } \nonumber\\ 
&& \mathbb{X}_{s,t} = \int_s^t (X_r - X_s)\otimes f(X_r)dr + \int_s^t  \eta(X_r)d\mathbf{B}^H_r - X_s \otimes \int_s^t
  \sigma(X_r)d\mathbf{B}^H_r. \label{rX}
\end{eqnarray}
Here the first integral is a Riemann integral and second and third integrals are rough integrals against rough path $\mathbf{B}^H$.

\begin{lemma}\label{geometric rough path} $\mathbf{X} = (X,
  \mathbb{X}) :      [0,T]\times [0,T] \rightarrow \mathbb{R}^d \oplus
            (\mathbb{R}^d\otimes \mathbb{R}^d)$ defined  in
  \eqref{rX} is an $\alpha$-geometric rough path. 
\end{lemma}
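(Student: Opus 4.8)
The plan is to verify the three defining properties of an $\alpha$-geometric rough path for $\mathbf{X}=(X,\mathbb{X})$: the analytic H\"older bounds, Chen's relation, and the geometricity (first-order calculus) condition $\mathrm{Sym}(\mathbb{X}_{s,t})=\frac12 X_{s,t}\otimes X_{s,t}$. For the analytic estimates, note that $X$ is $\alpha$-H\"older since $(X,\sigma(X))$ is a controlled rough path solving \eqref{modelRDE}. For $\mathbb{X}$, I would expand the two rough integrals locally, $\int_s^t\sigma(X_r)d\mathbf{B}^H_r=\sigma(X_s)B^H_{s,t}+O(|t-s|^{2\alpha})$ and $\int_s^t\eta(X_r)d\mathbf{B}^H_r=\eta(X_s)B^H_{s,t}+O(|t-s|^{2\alpha})$. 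Since $\eta(X_s)B^H_{s,t}=X_s\otimes(\sigma(X_s)B^H_{s,t})$, the first-order terms of $\int_s^t\eta(X_r)d\mathbf{B}^H_r$ and of $X_s\otimes\int_s^t\sigma(X_r)d\mathbf{B}^H_r$ cancel, leaving $O(|t-s|^{2\alpha})$; the drift term $\int_s^t(X_r-X_s)\otimes f(X_r)\,dr$ is $O(|t-s|^{1+\alpha})=o(|t-s|^{2\alpha})$. Hence $|\mathbb{X}_{s,t}|\lesssim|t-s|^{2\alpha}$.

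For Chen's relation I would compute the defect $\mathbb{X}_{s,t}-\mathbb{X}_{s,u}-\mathbb{X}_{u,t}$ term by term, using additivity of the rough and Riemann integrals over $[s,u]\cup[u,t]$. The rough integral $\int\eta(X)d\mathbf{B}^H$ is additive and contributes nothing; the drift term contributes $X_{s,u}\otimes\int_u^t f(X_r)\,dr$; and the term $-X_s\otimes\int\sigma(X)d\mathbf{B}^H$ contributes $X_{s,u}\otimes\int_u^t\sigma(X_r)d\mathbf{B}^H_r$. Summing these and using \eqref{modelRDE} to identify $\int_u^t f(X_r)\,dr+\int_u^t\sigma(X_r)d\mathbf{B}^H_r=X_{u,t}$ yields exactly $X_{s,u}\otimes X_{u,t}$.

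For geometricity, set $I_r:=\int_s^r f(X_u)\,du$ and $Y_r:=\int_s^r\sigma(X_u)d\mathbf{B}^H_u$, so that $X_{s,r}=I_{s,r}+Y_r$. A short rearrangement shows $\int_s^t\eta(X_r)d\mathbf{B}^H_r-X_s\otimes\int_s^t\sigma(X_r)d\mathbf{B}^H_r=\int_s^t X_{s,r}\otimes dY_r$, while the drift term equals $\int_s^t X_{s,r}\otimes dI_r$, giving
\[
\mathbb{X}_{s,t}=\mathbb{I}_{s,t}+\int_s^t\bigl(Y_{s,r}\otimes dI_r+I_{s,r}\otimes dY_r\bigr)+\mathbb{Y}_{s,t},
\]
where $\mathbb{I}_{s,t}=\int_s^t I_{s,r}\otimes dI_r$ and $\mathbb{Y}_{s,t}=\int_s^t Y_{s,r}\otimes dY_r$ is the canonical second level of the rough integral $Y$. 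Taking symmetric parts, $\mathbb{I}$ is the lift of a finite-variation path so $\mathrm{Sym}(\mathbb{I}_{s,t})=\frac12 I_{s,t}\otimes I_{s,t}$; the cross terms satisfy $\mathrm{Sym}(\,\cdot\,)=\frac12(I_{s,t}\otimes Y_{s,t}+Y_{s,t}\otimes I_{s,t})$ by the Leibniz rule (a Young integration by parts, since $I$ has finite variation); and $\mathbf{Y}$ is geometric because rough integration against the geometric rough path $\mathbf{B}^H$ preserves geometricity, so $\mathrm{Sym}(\mathbb{Y}_{s,t})=\frac12 Y_{s,t}\otimes Y_{s,t}$. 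Adding the three contributions gives $\mathrm{Sym}(\mathbb{X}_{s,t})=\frac12(I_{s,t}+Y_{s,t})^{\otimes2}=\frac12 X_{s,t}\otimes X_{s,t}$.

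The main obstacle is the geometricity step, and specifically justifying that the canonical lift $\mathbf{Y}$ of $\int\sigma(X)d\mathbf{B}^H$ is geometric and correctly identifying $\mathbb{Y}_{s,t}$ with the second-level term appearing in \eqref{rX}. An alternative would be to approximate $\mathbf{B}^H$ by canonical lifts of smooth paths, for which the corresponding $\mathbf{X}$ is a genuine iterated-integral lift and hence manifestly geometric, and then pass to the limit using continuity of the solution and integration maps in the rough-path topology; since this route requires stability results not developed in the excerpt, the direct computation above is preferable.
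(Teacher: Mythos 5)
Your overall architecture is sound and genuinely different from the paper's. The analytic bounds and Chen's relation are handled essentially as the paper does (the paper dismisses Chen as immediate and cites Theorem 4.10 of \cite{FH12} for the H\"older bounds; your term-by-term cancellation of the first-order parts and the identification $\int_u^t f(X_r)dr+\int_u^t\sigma(X_r)d\mathbf{B}^H_r=X_{u,t}$ via \eqref{modelRDE} are exactly what makes those claims true). For geometricity, however, the paper does not decompose $X$ into its drift and rough-integral components: it works directly with the compensated Riemann sums $S(\mathcal{P})$ defining $\mathbb{X}_{s,t}$, splits each summand into $X_u\otimes(\text{local expansion of }X_{u,v})$ plus the correction $\{D\eta(X_u)\sigma(X_u)-X_u\otimes D\sigma(X_u)\sigma(X_u)\}\mathbb{B}^H_{u,v}$, computes that correction in coordinates as $\sigma^{i,a}(X_u)\sigma^{j,k}(X_u)(\mathbb{B}^H_{u,v})^{a,k}$, symmetrizes using $\mathrm{Sym}(\mathbb{B}^H_{u,v})=\tfrac12 (B^H_{u,v})^{\otimes 2}$ and the controlled-path relation $\sigma(X_u)B^H_{u,v}=X_{u,v}+O(|v-u|^{2\alpha})$, and telescopes. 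Your $\mathbb{I}+\text{cross}+\mathbb{Y}$ decomposition is a cleaner bookkeeping of the same algebra and the treatment of $\mathbb{I}$ and of the cross terms via Young integration by parts is correct.

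The one substantive issue is that the step you defer --- $\mathrm{Sym}(\mathbb{Y}_{s,t})=\tfrac12 Y_{s,t}^{\otimes 2}$ for the canonical lift of $Y=\int\sigma(X)d\mathbf{B}^H$, together with the identification of $\int_s^t\eta(X_r)d\mathbf{B}^H_r-X_s\otimes\int_s^t\sigma(X_r)d\mathbf{B}^H_r$ with the rough integral $\int_s^t X_{s,r}\otimes dY_r$ against that lift --- is precisely where all the content of the lemma lives, and ``rough integration against a geometric rough path preserves geometricity'' is not available as a black box in this paper. As written, your argument proves the easy reductions and asserts the hard part. To close it you would either cite a stability-of-geometricity result from the rough-path literature, or carry out the Riemann-sum computation: writing $\mathbb{Y}_{s,t}$ as the limit of $\sum_{[u,v]}Y_{s,u}\otimes(\sigma(X_u)B^H_{u,v})+(\sigma(X_u)\otimes\sigma(X_u))\mathbb{B}^H_{u,v}+o(|v-u|)$, symmetrizing the second-level summand via \eqref{GRP} for $\mathbf{B}^H$, and telescoping --- which is the paper's proof in the special case $f\equiv 0$. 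So your route is viable and arguably more transparent, but it does not avoid the coordinate computation; it only isolates it. (The smooth-approximation alternative you mention would indeed need continuity of the It\^o--Lyons and integration maps, which the excerpt does not develop, so your instinct to prefer the direct computation is right.)
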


\begin{proof}
The verification of the Chen's relation 
\[ \mathbb{X}_{s,t} - \mathbb{X}_{s,u} - \mathbb{X}_{u,t} = X_{s,u}\otimes X_{u,t} \]
is immediate from the definition of $\mathbb{X}$. Also since $f$ is a bounded function and $(X, \sigma(X))$ is a controlled rough path, Theorem $4.10$ in \cite{FH12} easily implies 
\[ ||X||_{\alpha} + ||\mathbb{X}||_{2\alpha} < \infty.\]
We now verify that $\mathbf{X}$ is a geometric rough path. Without loss of generality we assume $X_s = 0$. By definition, 
\[ \mathbb{X}_{s,t} = \lim\limits_{|\mathcal{P}| \to 0 } S(\mathcal{P}),\]
where for partitions $\mathcal{P}$ of $[s,t]$, 
\begin{align*} S(\mathcal{P}) & := \sum_{[u,v] \in \mathcal{P}} X_u\otimes f(X_u) (v-u) + \eta(X_u)_{u,v} + D\eta(X_u)\sigma(X_u)\mathbb{B}^H_{u,v} \\ 
& = \sum_{[u,v] \in \mathcal{P}} X_u\otimes \{ f(X_u) (v-u) +
  \sigma(X_u)B^H_{u,v} + D\sigma(X_u)\sigma(X_u)\mathbb{B}^H_{u,v} \}
  \\ & \qquad \qquad +  \sum_{[u,v] \in
    \mathcal{P}}\{D\eta(X_u)\sigma(X_u) - X_u \otimes D\sigma(X_u)\sigma(X_u)\}\mathbb{B}^H_{u,v}.
\end{align*}

Again using Theorem $4.10$ in \cite{FH12} and continuity of $f$, we note that 
\begin{align*}
&f(X_u) (v-u) + \sigma(X_u)B^H_{u,v} + D\sigma(X_u)\sigma(X_u)\mathbb{B}^H_{u,v}\\
&= \int_u^v f(X_r)dr + \int_u ^v \sigma(X_r)d\mathbf{B}^H_r  + o(|v-u|)\\
&= X_v - X_u + o(|v-u|).
\end{align*}
By direct computations, 
\[ (\{D\eta(X_u)\sigma(X_u) - X_u \otimes D\sigma(X_u)\sigma(X_u)\}\mathbb{B}^H_{u,v})^{i,j} = \delta^{i,l}\sigma^{j,k}(X_u)\sigma^{l,a}(X_u){\mathbb{B}^H_{u,v}}^{a,k}.   \]
Here $v^{i,j}$ denotes the $(i,j)$ component of vector/tensor $v$, $\delta^{i,j}$ is the Kronecker delta function and we have used Einstein notation of summing over repeated indices. Thus, 
\[ S(\mathcal{P})^{i,j} = \sum_{[u,v]\in \mathcal{P}} X_u^iX_{u,v}^j + \sigma^{i,a}(X_u)\sigma^{j,k}(X_u){\mathbb{B}^H_{u,v}}^{a,k} + o(|v-u|).\]
Noting that $\mathbf{B}^H$ is a geometric rough path, we have
${\mathbb{B}^H_{u,v}}^{a,k} + {\mathbb{B}^H_{u,v}}^{k,a} ={B^H_{u,v}}^{a} {B^H_{u,v}}^k$. Thus, 
\[ S(\mathcal{P})^{i,j} + S(\mathcal{P})^{j,i} =  \sum_{[u,v]\in \mathcal{P}} X_u^iX_{u,v}^j + X_u^jX_{u,v}^i+ (\sigma(X_u){B^H}_{u,v})^i(\sigma(X_u){B^H}_{u,v})^j + o(|v-u|) \]
Finally, since $(X, \sigma(X))$ is a controlled rough path, 
\[ S(\mathcal{P})^{i,j} + S(\mathcal{P})^{j,i} =  \sum_{[u,v]\in \mathcal{P}} X_u^iX_{u,v}^j + X_u^jX_{u,v}^i+ X_{u,v}^iX_{u,v}^j + o(|v-u|)\]
and since $\mathbb{X}_{s,t} = \lim\limits_{|\mathcal{P}|\to 0 } S(\mathcal{P})$, $Sym(\mathbb{X}_{s,t}) = \frac{1}{2}X_{s,t}^{\otimes 2}$ which finishes the proof.
\end{proof}

We have constructed a well defined rough path $\mathbf{X}$ for
underlying path $X$. So we can make sense of the rough integral 
\[ \int_0^T F(X_r)d\mathbf{X}_r,\]
for any $F: \R^d \rightarrow M_{d\times d}(\R)$ which is continuously
twice differentiable. Also as $X$ is a $B^H$-controlled rough path,
$F(X)$ is also a $B^H$-controlled rough path and the rough integral
\[ \int_0^T F(X_r)d\mathbf{B}^H_r\]
is also well defined. The following Lemma establishes the relation
between two rough integrals.
\begin{lemma}[Rough It\^o Formula]\label{compatible}
If $F: \R^d \rightarrow $ is a $C^2$ $d\times d$ matrix valued function, then 
\[ \int_0^T F(X_r)d\mathbf{X}_r = \int_0^T F(X_r)f(X_r)dr + \int_0^T F(X_r)\sigma(X_r)d\mathbf{B}^H_r.\]
\end{lemma}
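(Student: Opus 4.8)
The plan is to prove the Rough It\^o formula (Lemma \ref{compatible}) by expressing both sides as limits of compensated Riemann sums over partitions and showing the summands agree up to lower-order terms. The key observation is that the rough path $\mathbf{X}$ was \emph{defined} in \eqref{rX} precisely so that its increments decompose as $X_{s,t} = \int_s^t f(X_r)dr + \int_s^t \sigma(X_r)d\mathbf{B}^H_r$ and its second-level object $\mathbb{X}_{s,t}$ packages together the Riemann-integral drift contribution and the rough-integral contribution $\eta(X_r)$. So the proof is essentially a bookkeeping argument matching the rough integral against $\mathbf{X}$ with the two integrals on the right-hand side.

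First I would recall that for a controlled rough path $(F(X), F'(X))$ with Gubinelli derivative $F'(X) = DF(X)X' = DF(X)\sigma(X)$, the rough integral against $\mathbf{X}$ is
\[ \int_0^T F(X_r)d\mathbf{X}_r = \lim_{|\mathcal{P}| \to 0} \sum_{[s,t]\in\mathcal{P}} F(X_s)X_{s,t} + DF(X_s)\sigma(X_s)\,\mathbb{X}_{s,t}, \]
invoking the existence and convergence of this sum via Theorem $4.10$ of \cite{FH12} (the sewing lemma), which applies since $F$ is $C^2$ and $\mathbf{X}$ is an $\alpha$-rough path by Lemma \ref{geometric rough path}. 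Next I would substitute the definition \eqref{rX} of $\mathbb{X}_{s,t}$. Over a single interval $[s,t]$ of small length, I would use that $X_{s,t} = \int_s^t f(X_r)dr + \int_s^t \sigma(X_r)d\mathbf{B}^H_r$ directly from the equation, and that $\mathbb{X}_{s,t}$ equals the Riemann-integral term $\int_s^t (X_r - X_s)\otimes f(X_r)dr$ plus $\int_s^t \eta(X_r)d\mathbf{B}^H_r - X_s\otimes\int_s^t\sigma(X_r)d\mathbf{B}^H_r$. The terms involving $X_s$ as a tensor factor combine with $F(X_s)X_{s,t}$ so as to reconstruct the compensated sums for the two integrals on the right.

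The main technical step is to verify that, after substitution, the summand $F(X_s)X_{s,t} + DF(X_s)\sigma(X_s)\mathbb{X}_{s,t}$ coincides, up to $o(|t-s|)$ error (equivalently, up to terms summing to zero as $|\mathcal{P}|\to 0$), with the compensated Riemann sum for $\int_s^t F(X_r)f(X_r)dr$ plus the rough-integral sum $F(X_s)\sigma(X_s)B^H_{s,t} + D(F\sigma)(X_s)\sigma(X_s)\mathbb{B}^H_{s,t}$ that defines $\int_0^T F(X_r)\sigma(X_r)d\mathbf{B}^H_r$. The Riemann integral $\int_s^t F(X_r)f(X_r)dr$ is handled by writing $F(X_r) = F(X_s) + O(|X_{s,r}|)$ and $f$ continuous, so $\int_s^t F(X_r)f(X_r)dr = F(X_s)\int_s^t f(X_r)dr + o(|t-s|)$. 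For the rough-integral part I would compute the Gubinelli derivative of the controlled rough path $F(X)\sigma(X)$ against $\mathbf{B}^H$, namely $D(F\sigma)(X_s)\sigma(X_s)$, and match the second-level coefficient against the contribution of $\eta(X_r)$ inside $\mathbb{X}_{s,t}$; the defining structure $\eta = i\otimes\sigma$ and $D\eta(X)\sigma(X)$ is exactly what produces the correct $\mathbb{B}^H_{s,t}$ coefficient.

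The hard part will be keeping the tensor indices and the compensation terms aligned: the $X_s\otimes(\cdots)$ subtractions in \eqref{rX} are designed to cancel against the $X_s$-dependent pieces generated by expanding $F(X_s)X_{s,t}$ and the second-level term, and I must confirm these cancellations are exact rather than merely lower order. I would organize this by checking, on each interval, that the difference between the two candidate summands is a genuine second-order remainder controlled by $|\mathbb{B}^H_{s,t}| \sim |t-s|^{2\alpha}$ with $2\alpha > 1$, together with the continuity modulus of $f$ and the $C^2$ regularity of $F$; then the sewing/uniqueness argument (Theorem $4.10$ of \cite{FH12}) forces the two limits to coincide. Once the per-interval identity is established modulo admissible errors, the equality of the full integrals follows by taking $|\mathcal{P}|\to 0$.
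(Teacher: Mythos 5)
Your overall architecture is the same as the paper's: write both sides as limits of compensated Riemann sums, show the summands agree up to $o(|t-s|)$, and conclude by sewing. But two of your key formulas are wrong in ways that would derail the computation. First, your compensated sum for the left-hand side is incorrect: for the integral of $F(X)$ against the rough path $\mathbf{X}=(X,\mathbb{X})$, the relevant Gubinelli derivative is that of $F(X)$ \emph{with respect to $X$}, namely $DF(X)$, so the summand is $F(X_s)X_{s,t}+DF(X_s)\mathbb{X}_{s,t}$ (this is exactly what the paper's proof uses). The object $DF(X)\sigma(X)$ you wrote is the Gubinelli derivative of $F(X)$ with respect to $B^H$; placing it in front of $\mathbb{X}_{s,t}$ defines a different sum. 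Since $\mathbb{X}_{s,t}=O(|t-s|^{2\alpha})$ with $2\alpha\le 1$, the spurious factor $\sigma(X_s)$ is not absorbed into the error and changes the value of the limit, so your subsequent matching against $D(F\sigma)(X_s)\sigma(X_s)\mathbb{B}^H_{s,t}$ cannot close: the identity that makes the second levels agree is $D(F\sigma)(0)=F(0)D\sigma(0)+DF(0)D\eta(0)$, with no extra $\sigma$ in front of $D\eta$.

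Second, your error analysis rests on the assertion that the per-interval discrepancy is ``controlled by $|\mathbb{B}^H_{s,t}|\sim|t-s|^{2\alpha}$ with $2\alpha>1$.'' For $H\le\frac{1}{2}$ one has $\alpha<\frac{1}{2}$, hence $2\alpha<1$: terms of order $|t-s|^{2\alpha}$ do \emph{not} sum to zero as $|\mathcal{P}|\to0$; they are precisely the second-level compensators that must be matched coefficient by coefficient. Only remainders of order $|t-s|^{3\alpha}$ (with $3\alpha>1$), coming from the $C^2$ Taylor remainder of $F$ and from the estimate \eqref{rough inequality}, may be discarded. The paper's proof does exactly this: normalizing $X_u=0$, it expands $X_{u,v}$ and $\mathbb{X}_{u,v}$ to $o(|v-u|)$ precision while retaining all $\mathbb{B}^H_{u,v}$-terms, and identifies $\{F(X)\sigma(X)\}'=F(X)D\sigma(X)\sigma(X)+DF(X)D\eta(X)\sigma(X)$. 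Once you correct the definition of the left-hand sum and carry out this second-level matching explicitly, your argument reduces to the paper's.
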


\begin{proof} Similar to the proof of Lemma \ref{geometric rough path}, it suffices to prove that 
\begin{align*}
F(X_u)X_{u,v} & + DF(X_u)\mathbb{X}_{u,v} \\
& - F(X_u)f(X_u)(v-u) - F(X_u)\sigma(X_u)B_{u,v}^H - \{F(X_u)\sigma(X_u)\}'\mathbb{B}^H_{u,v} = o(|v-u|).
\end{align*}

To this end, assume without loss of generality that $X_u =0$. Note that 
\[ X_{u,v} = f(X_u)(v-u) + \sigma(X_u)B_{u,v}^H + D\sigma(X_u)\sigma(X_u)\mathbb{B}_{u,v}^H + o (|v-u|), \]
and since $X_u =0$,
\[\mathbb{X}_{u,v} =  D\eta(X_u)\sigma(X_u) \mathbb{B}_{u,v}^H + o(|v-u|),\]
where $\eta = i \otimes \sigma$ as defined above. Also, if $F\sigma$ denotes the function $(F\sigma)(x) := F(x)\sigma(x)$, then it is easy to check that 
\[ D(F\sigma)(0) = F(0)D\sigma(0) + DF(0)D\eta(0)\]
The Gubinelli derivative $\{F(X_u)\sigma(X_u)\}'$ of the controlled rough path $F(X_u)\sigma(X_u)$ is thus given by 
\[ \{F(X_u)\sigma(X_u)\}' = F(X_u)D\sigma(X_u)\sigma(X_u) + DF(X_u)D\eta(X_u)\sigma(X_u).\] 
This completes the proof.
\end{proof}

\begin{proof}[Proof of Proposition \ref{transformation}]
Since $\sigma(x)$ satisfies $(S2)$, $det(\sigma(x)) \neq 0$ and $\sigma(x)^{-1}$ is a well defined invertible matrix. Let \[G(z) = \int_{\gamma} \sigma(x)^{-1}dx,\]
where $\gamma$ is any smooth curve joining $0$ and $z$. Since $(S3)$ holds, $G$ is a well defined function. Also, it can be easily verified that $DG(z)= \sigma(z)^{-1}$. Then it follows from Inverse Function Theorem that $G$ is a local diffeomorphism. In fact we claim that $G$ is a global diffeomorphism. In view of Lemma \ref{hadamard}, we just need to verify $|G(z)| \to \infty$ as $|z| \to \infty$. To this end, note that either $v^{\intercal}\sigma(x)v > 0$ for all $x$ and $ v \neq 0 $ or $v^{\intercal}\sigma(x)v < 0$ for all $x$ and $ v \neq 0 $. This easily follows from continuity of $\sigma$ and connectedness $\mathbb{R}^d \times \{\mathbb{R}^d\setminus {0}\}$ for $d\geq 2$. The case of $d=1$ can also be directly verified. Without loss of generality, we assume that $v^{\intercal}\sigma(x)v > 0$ for all $x$ and $ v \neq 0 $. From $(S2)$, we get 
\begin{equation}\label{lower-bound} v^{\intercal}\sigma(x)^{-1}v \geq \lambda ||\sigma(x)^{-1}v||^2.
\end{equation}
Since $\sigma$ is bounded, $||\sigma(x)^{-1}v|| \geq \frac{1}{||\sigma||_{\infty}}||v||$ and thus 
\[v^{\intercal}DG(x)v \geq \lambda' ||v||^2, \]
for some $\lambda'> 0$. For a fixed $v$, introduce the function $K: \mathbb{R} \to \mathbb{R}$ defined by $K(\theta) := v^{\intercal}G(\theta v)$. Then clearly $K'(\theta) = v^{\intercal}DG(\theta v)v \geq  \lambda' ||v||^2$. Integrating both sides, we get 
\[ v^{\intercal}\{G(v) - G(0)\} \geq \lambda' ||v||^2 \]
From Cauchy-Schwarz inequality, $ |v^{\intercal}\{G(v) - G(0)\}| \leq ||G(v) - G(0)||||v||$. This implies $||G(v) - G(0)|| \geq \lambda' ||v||$. In particular $||G(v)|| \to \infty$ as $||v|| \to \infty$. This proves that $G$ is in fact a global diffeomorphism. \\
Also, again from Cauchy-Schwarz inequality, $v^{\intercal}\sigma(x)^{-1}v \leq ||\sigma(x)^{-1}v ||||v ||$ and it follows from \eqref{lower-bound} that $||\sigma(x)^{-1}v|| \leq \frac{1}{\lambda}||v||$ which proves that $ DG = \sigma^{-1}$ is a bounded function. Thus in particular, $\tilde{f}$ is a bounded continuous function. \\
Finally, let $(X, \sigma(X))$ be a solution to \eqref{modelRDE}. $X$ can be lifted to a geometric rough path $\mathbf{X}$ given by Lemma \ref{geometric rough path}. Using Lemma \ref{integration-by-parts}, \[ G(X_t) = G(x_0 ) + \int_0^t DG(X_r)d\mathbf{X}_r.\]
Since $DG(x) = \sigma(x)^{-1}$, using Lemma \ref{compatible}, 
\begin{align*} \int_0^t DG(X_r)d\mathbf{X}_r & = \int_0^t DG(X_r)f(X_r)dr + \int_0^t DG(X_r)\sigma(X_r)d\mathbf{B}^H_r \\
& = \int_0^t DG(X_r)f(X_r)dr + \mathbf{B}^H_t
\end{align*}
Thus, $Z_t = G(X_t)$ satisfies the equation \eqref{unit-sigma}.
Conversely, if $Z$ is a solution to \eqref{unit-sigma}, applying Lemma \ref{integration-by-parts} and \ref{compatible} to $ G^{-1}(Z_t)$ proves that $X_t = G^{-1}(Z_t)$ solves \eqref{modelRDE}.
\end{proof}

\section{Flow for $H= \frac{1}{2}, d=1$, $\sigma \equiv 1$, $f$ H\"older and compactly supported} \label{flowidentity}

In this section we restrict ourselves to $d=1, H = \frac{1}{2}$ case. Let $\sigma\equiv 1$ and $f$ be a compactly supported $\theta$-H\"older function for $\theta \in (0,1)$ in
\eqref{modelRDE}. Then \eqref{modelRDE} reduces to
\begin{equation} \label{modelSDE1}
  X_t = x + \int_0^t f(X_r) dr + B_t.
\end{equation}

We will derive properties of the flow associated with equation \eqref{modelSDE1}. Existence of a flow $\phi(s,t,x)$ was shown in \cite{Flandoli-Fedrizzi}. The definition of a flow in \cite{Flandoli-Fedrizzi} is somewhat different from Definition \ref{just-flow} here, see Definition $5.1$ in \cite{Flandoli-Fedrizzi}. We consider the flow $\phi(s,t,x)$ constructed in \cite{Flandoli-Fedrizzi} according to their definition. Thus the random field $\phi(s,t,x)$ satisfies:
\[ \mathbb{P}[ \phi(s,t,x) = \phi(u,t, \phi(s,u,x)) \hspace{2mm}\mbox{and}\hspace{2mm} \phi(s,s,x) = x \hspace{2mm} \mbox{for all $s \leq u \leq t $ and $x \in \mathbb{R}$}] = 1\] 
and for each fixed $s\in [0,T]$, $\phi(s,t,x)$ is almost surely jointly continuous in $(t,x)$, adapted to the filtration of Brownian motion $B_t^s = B_t - B_s$ and satisfies 
\[ \phi(s,t,x) = x + \int_s^t f(\phi(s,r,x))dr + B_t - B_s.\]

We will prove that field $\phi$ has a continuous modification $\psi$
which satisfies the properties of flow as per Definition
\ref{just-flow} with $\sigma \equiv 1$. For fixed $s\geq 0$ and $x, y
\in \R$, define for $s\leq t <T$ and $u \in [0,1]$,
\[ Z_t^u := u \phi(s, t, x) + (1-u) \phi(s,t , y),\]
\[ I_t^u := F(Z_t^u) - F(Z_s^u) - \int_s^t f(Z_r^u)(uf(\phi(s,r,x)) + (1-u)f(\phi(s,r,y)))dr \]
where $F$ is the antiderivative of function $f$, i.e. $F' = f$, 
\[ J_t^u := \int_s^tf(Z_r^u)dB_r,\]
and  the random field $J$ by
\[ J(s,t, x, y) := \int_0^1 J_t^udu = \int_0^1 \int_s^t f(u\phi(s,r,x) + (1-u)\phi(s,r, y))dB_rdu.\]
Our main result in this section is the following.
\begin{proposition} \label{exp-identity} Let $\phi, I,J$ be as given above.
  
  \begin{enumerate} 
    \item[(a)]For each fixed $s,x, y$, almost surely for all $s \leq t <T$, 
\begin{equation}\label{derivative-identity}\phi(s,t,x) - \phi(s,t,y) = (x-y) \exp\biggl[2 \int_0^1 \{I_t^u - J_t^u\} du\biggr] 
\end{equation}
\item[(b)] For each $p \geq 2 $ and $T>0$, there exists a constant $ C_1 = C_1(p, f, T)$ depending only on $p,$ $f$ and $T$ such that for all $0\leq s \leq t \leq T$, $0\leq \tilde{s}\leq \tilde{t}\leq T$ and $x,\tilde{x} \in \R$, 
\begin{equation} \label{Lp-bound}
 \mathbb{E}[| \phi(s,t,x) - \phi(\tilde{s}, \tilde{t}, \tilde{x}) |^p] \leq  C_1 ( |x-\tilde{x}|^p +  |t-\tilde{t}|^{\frac{p}{2}} + |s-\tilde{s}|^{\frac{p}{2}})\end{equation}
\item[(c)] For each $p \geq \frac{2}{\theta} $ and $T>0$, there exists a constant $ C_2 = C_2(p, f, T)$ depending only on $p,$ $f$ and $T$ such that for all $0\leq s \leq t \leq T$, $0\leq \tilde{s}\leq \tilde{t}\leq T$ and $x,y,\tilde{x},\tilde{y} \in \R$,
\begin{equation} \label{stochastic-integral-continuous}\mathbb{E}[|J(s,t, x, y) - J(\tilde{s}, \tilde{t}, \tilde{x}, \tilde{y})|^p] \leq C_2( |s-\tilde{s}|^{\frac{p\theta}{2}} + |t-\tilde{t}|^{\frac{p\theta}{2}} + |x-\tilde{x}|^{p\theta} + |y-\tilde{y}|^{p\theta}).\end{equation}
\item[(d)]$\phi$  has a  modification which is almost surely $\eta$-H\"older continuous jointly in $(s,t,x)$ for any $\eta < \frac{1}{2}$ and $J$ has a modification which is almost sure $\eta$-H\"older continuous for any $\eta < \frac{\theta}{2}.$
  
  \end{enumerate}
\end{proposition}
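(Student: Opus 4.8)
The plan for (a) is to establish the identity first for smooth drifts and then pass to the limit. Fix $s,x,y$ with $x\neq y$ and set $D_t:=\phi(s,t,x)-\phi(s,t,y)$. Since $\phi(s,\cdot,x)$ and $\phi(s,\cdot,y)$ are driven by the same Brownian motion, the noise cancels and $D$ solves the pathwise ODE $\dot D_t=f(\phi(s,t,x))-f(\phi(s,t,y))$. When $f$ is in addition smooth I would apply It\^o's formula to $F(Z_t^u)$, using $\langle Z^u\rangle_t=t$ and $F'=f$, to get $I_t^u-J_t^u=\tfrac12\int_s^t f'(Z_r^u)\,dr$; Fubini together with the fundamental theorem of calculus in $u$ (the substitution $w=u\phi(s,r,x)+(1-u)\phi(s,r,y)$) then gives $2\int_0^1(I_t^u-J_t^u)\,du=\int_s^t \frac{f(\phi(s,r,x))-f(\phi(s,r,y))}{\phi(s,r,x)-\phi(s,r,y)}\,dr=\int_s^t \dot D_r/D_r\,dr$, and integrating the ODE yields the exponential identity. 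For general $\theta$-H\"older $f$ I would mollify to obtain smooth, compactly supported $f_n\to f$ with uniformly bounded H\"older norms, let $\phi_n$ be the associated flows, and invoke pathwise uniqueness and stability of the SDE to get $\phi_n(s,\cdot,x)\to\phi(s,\cdot,x)$; then $I_t^{u,n},J_t^{u,n}$ converge ($J$ via It\^o's isometry), and the identity passes to the limit. Using smooth approximants also finesses the delicate point that $D_t$ must never vanish for $\log|D_t|$ to make sense: for Lipschitz $f_n$ this is immediate from uniqueness, and the limiting identity then shows $D_t\neq0$ a posteriori.

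For (b) I would decompose $\phi(s,t,x)-\phi(\tilde s,\tilde t,\tilde x)$ into an $x$-increment, a $t$-increment and an $s$-increment and treat each. The $x$-increment is handled by (a): $|\phi(s,t,x)-\phi(s,t,\tilde x)|^p=|x-\tilde x|^p\exp\!\bigl(2p\int_0^1(I_t^u-J_t^u)\,du\bigr)$, so the estimate reduces to a uniform exponential-moment bound. The key structural point is that $I_t^u$ is \emph{bounded} — $F$ is bounded because $f$ is bounded with compact support, and the drift term in $I_t^u$ is bounded by $\|f\|_\infty^2 T$ — while $\int_0^1 J_t^u\,du=J(s,t,x,\tilde x)$ is, by stochastic Fubini, a martingale with quadratic variation at most $\|f\|_\infty^2(t-s)$, hence sub-Gaussian; together these give $\mathbb{E}[\exp(2p\int_0^1(I_t^u-J_t^u)\,du)]\le C$. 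The $t$-increment follows directly from $|\phi(s,t,x)-\phi(s,\tilde t,x)|\le\|f\|_\infty|t-\tilde t|+|B_t-B_{\tilde t}|$, giving the $|t-\tilde t|^{p/2}$ term. For the $s$-increment I would use the flow property $\phi(\tilde s,\tilde t,\tilde x)=\phi(s,\tilde t,\phi(\tilde s,s,\tilde x))$ to rewrite it as an $x$-increment of $\phi(s,\tilde t,\cdot)$ between $\tilde x$ and the random point $\phi(\tilde s,s,\tilde x)$; since the latter depends only on the increments of $B$ over $[\tilde s,s]$ and hence is independent of $\phi(s,\tilde t,\cdot)$ (property (4) of Definition \ref{just-flow}), conditioning and the $x$-estimate reduce it to $\mathbb{E}[|\phi(\tilde s,s,\tilde x)-\tilde x|^p]\le C|s-\tilde s|^{p/2}$.

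For (c), write $J(s,t,x,y)=\int_s^t g(s,r,x,y)\,dB_r$ with $g(s,r,x,y)=\int_0^1 f(u\phi(s,r,x)+(1-u)\phi(s,r,y))\,du$, and apply Burkholder--Davis--Gundy to each of the four increments, controlling integrand differences by the $\theta$-H\"older continuity of $f$ and the estimates from (b). For the $x$-increment, H\"older continuity gives $|g(s,r,x,y)-g(s,r,\tilde x,y)|\le C|\phi(s,r,x)-\phi(s,r,\tilde x)|^\theta$, and after BDG, Jensen and Fubini the bound $\mathbb{E}[|\phi(s,r,x)-\phi(s,r,\tilde x)|^{p\theta}]\le C|x-\tilde x|^{p\theta}$ from (b) (valid since $p\theta\ge2$) yields the $|x-\tilde x|^{p\theta}$ term; the $y$-increment is identical. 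The $t$-increment is immediate from BDG and boundedness of $g$, giving $|t-\tilde t|^{p/2}\le C|t-\tilde t|^{p\theta/2}$ on bounded time intervals. The $s$-increment again uses the flow: splitting off the short integral over $[\tilde s,s]$ (bounded by $C|s-\tilde s|^{p/2}$ via BDG) and estimating the remaining integrand difference by $C(|\phi(\tilde s,r,x)-\phi(s,r,x)|^\theta+|\phi(\tilde s,r,y)-\phi(s,r,y)|^\theta)$, whose $L^p$ norm is controlled by the $s$-estimate of (b).

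Finally, (d) is an application of the Kolmogorov continuity theorem. Estimate (b) is a Kolmogorov bound in the three parameters $(s,t,x)$ with effective exponents $p/2,p/2,p$; the bottleneck variables are $s,t$, so for each large $p$ there is a jointly H\"older modification whose exponent approaches $1/2$ as $p\to\infty$, giving every $\eta<\tfrac12$. Likewise (c) is a Kolmogorov bound in four parameters with exponents $p\theta/2$ for $s,t$ and $p\theta$ for $x,y$, producing a modification with any $\eta<\tfrac{\theta}{2}$. The main obstacle in the whole proposition is part (a): the approximation must simultaneously deliver convergence of the flows, of the stochastic integrals $J_t^u$, and of the $I_t^u$ terms, and must be reconciled with the non-vanishing of $D_t$. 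Once (a) and the boundedness of $I_t^u$ are secured, parts (b)--(d) follow from the moment and Kolmogorov machinery sketched above.
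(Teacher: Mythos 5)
Your proposal follows essentially the same route as the paper: for (a) smooth approximation plus It\^o's formula applied to $F(Z^u)$ and the mean-value representation of $f(\phi(s,\cdot,x))-f(\phi(s,\cdot,y))$; for (b) the flow property, independence, and a uniform exponential moment bound on $\int_0^1\{I_t^u-J_t^u\}\,du$; for (c) Burkholder--Davis--Gundy combined with the $\theta$-H\"older continuity of $f$ and the estimates of (b); and for (d) Kolmogorov's continuity theorem. The only (harmless) variations are cosmetic: you bound $I_t^u$ via boundedness of $F$ where the paper uses $|F(Z^u_t)-F(Z^u_s)|\le \|f\|_\infty|Z^u_{s,t}|$ plus Fernique, and you organize (b)--(c) as a three-way telescoping in $(s,t,x)$ rather than the paper's two-case split on whether $t\le\tilde s$.
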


\begin{proof}[Proof of Proposition \ref{exp-identity}(a)]

First we consider the case when function $f$ is continuously differentiable. Defining $S_t = \phi(s,t,x) - \phi(s,t,y)$, note that 
\begin{align*} S_t & = x-y + \int_s^t \{f(\phi(s,r,x)) - f(\phi(s,r,y))\}dr \\
& = x-y + \int_s^t \int_0^1 f'(u \phi(s,r,x) + (1-u)\phi(s,r, y)) (\phi(s,r,x) - \phi(s,r,y))du dr \\
&= x-y + \int_s^t \biggl\{\int_0^1 f'(Z_r^u)du\biggr\} S_rdr
\end{align*}
Solving the above linear differential equation, we get 
\begin{align*} S_t &= (x-y) \exp\biggl[ \int_s^t \int_0^1 f'(Z_r^u)dudr \biggr] \\ 
&= (x-y) \exp\biggl[  \int_0^1 \int_s^t  f'(Z_r^u)dr du \biggr]
\end{align*}
Now note that $Z_t^u$ is a semimartingale with the quadratic variation $[Z^u]_t = t $ and using It\^o formula, 
\[ \int_s^t  f'(Z_r^u)dr = 2 \biggl\{ F(Z_t^u) - F(Z_s^u) - \int_s^t f(Z_r^u)dZ_r^u\biggr\} = 2\{I_t^u - J_t^u\},\]
which establishes the claim when $f$ is continuously differentiable. Now for the general case when $f$ is just a continuous compactly supported function, we consider a sequence of continuously differentiable functions $f^n$ converging uniformly to $f$ as $n \to \infty$. Using Theorem $2.1$ in \cite{KR}, we know that $\phi(s,t,x)$ is the unique strong solution to the equation 
\[ \phi(s,t, x) = x + \int_s^t f(\phi(s,r,x))dr + B_t - B_s.\]
Let $\phi^n(s,t,x)$ denote the solution of above equation when $f$ is replaced by $f^n$. Then we easily conclude that almost surely $\phi^n(s,t,x)$ converge to $\phi(s,t,x)$ uniformly in $t$ as $n \to \infty$. It then easily follows that $\int_0^1 I_t^{u,n}du$ and $\int_0^1 J_t^{u,n}du$ converge to $\int_0^1 I_t^{u}du$ and $\int_0^1 J_t^{u}du$ respectively as $n\to \infty$ and this completes the proof.

\end{proof}

\begin{proof}[Proof of Proposition \ref{exp-identity}(b)] W.l.o.g. we can assume $s \leq \tilde{s}$. If $t \in [s, \tilde{s}]$, we use the bound   
  \begin{eqnarray*}
    \lefteqn{|\phi(s,t,x) - \phi(\tilde{s}, \tilde{t}, \tilde{x})|}\\
    &=&\bigl| x - \tilde{x} + \int_s^t f(\phi(s,r,x))dr - \int_{\tilde{s}}^{\tilde{t}}f(\phi(\tilde{s}, r, \tilde{x}))dr + B_t - B_s - (B_{\tilde{t}} - B_{\tilde{s}}) \bigr| \\
&\leq& |x - \tilde{x}| +  ||f||_{\infty}( |t-s| + |\tilde{t} - \tilde{s}| ) + |B_t - B_{\tilde{t}}| + |B_{s} - B_{\tilde{s}}|.
\end{eqnarray*}
Since in this case $|t-s| \leq |s- \tilde{s}|$ and $|\tilde{t} - \tilde{s}| \leq |\tilde{t} - t|$, using the standard moment bounds on the increments of Brownian motion, the claim follows.\\
When $t \geq \tilde{s}$, using the flow property, $\phi(s,t,x) - \phi(\tilde{s}, \tilde{t}, \tilde{x}) = \phi(\tilde{s}, t, \phi(s, \tilde{s}, x)) - \phi(\tilde{s}, \tilde{t}, \tilde{x})$. Again, since \[|\phi(\tilde{s}, \tilde{t}, \tilde{x}) - \phi(\tilde{s}, t, \tilde{x})| \leq ||f||_{\infty}|t-\tilde{t}| + |B_{t} - B_{\tilde{t}}|,\] it is enough to get the desired moment estimates on $ |\phi(\tilde{s}, t, \phi(s, \tilde{s}, x)) - \phi(\tilde{s}, t, \tilde{x})|.$ Using Proposition \ref{exp-identity} (a), 
\[ |\phi(\tilde{s}, t, \phi(s, \tilde{s}, x)) - \phi(\tilde{s}, t, \tilde{x})| = | \phi(s, \tilde{s}, x)-\tilde{x}| \exp\biggl[2  \int_0^1 \{I_t^u - J_t^u\}du\biggr].\]
Note that $\phi(s, \tilde{s}, x)$ is independent of $I_t^u$ and $J_t^u$ $($please note that we have replaced the parameter $s,x,y$ to $\tilde{s}, \tilde{x}, \phi(s,\tilde{s},x))$ respectively in the definition of $I_t^u$ and $J_t^u$ here$)$ and 
\[ | \phi(s, \tilde{s}, x)-\tilde{x}| \leq |x-\tilde{x}| + ||f||_{\infty}|s-\tilde{s}| + |B_s-B_{\tilde{s}}|.\]
Thus it suffices to prove that 
\begin{equation}\label{exp-martingale-bound} \mathbb{E}\biggl\{\exp\biggl[2 p \int_0^1 \{I_t^u - J_t^u\}du\biggr]\biggr\} \leq C(p, ||f||_{\infty},T).
\end{equation}
To this end, note that 
\[ |I_t^u|   \leq 2 ||f||_{\infty}^2 |t-\tilde{s}| + ||f||_{\infty}|B_t -B_{\tilde{s}}|\]
and $J_t^u$ is a martingale with $[J^u]_t \leq ||f||_{\infty}^2 (t - \tilde{s})$. Also by Dambis-Dubins-Schwarz martingale embedding theorem, $J_t^u = \tilde{B}_{[J^u]_t}$ for some another Brownian motion $\tilde{B}$. Finally, using Fernique Theorem gives us the bound \eqref{exp-martingale-bound} and this concludes the proof.
\end{proof}

\begin{proof}[Proof of Proposition \ref{exp-identity}(c)]W.l.o.g. we assume $s \leq \tilde{s}$. If $t \in [s, \tilde{s}]$, then using Fubini Theorem for stochastic integrals and Burkholder-Davis-Gundy inequality, 
\begin{align*} \mathbb{E}[|J(s,t, x, y)|^p] &= \mathbb{E}\biggl| \int_s^t\int_0^1 f(u\phi(s,r,x) + (1-u)\phi(s,r, y))dudB_r\biggr|^p \\ & \leq C(p) \mathbb{E}\biggl( \int_s^t\biggl\{\int_0^1 f(u\phi(s,r,x) + (1-u)\phi(s,r, y))du\biggr\}^2dr\biggr)^\frac{p}{2} \\
  & \leq C(p, ||f||_{\infty}) |t-s|^{\frac{p}{2}}\\
& \leq C(p, ||f||_{\infty}) |s-\tilde{s}|^{\frac{p}{2}}.
\end{align*}
Similarly, $\mathbb{E}[|J(\tilde{s}, \tilde{t}, \tilde{x}, \tilde{y})|^p]  \leq  C(p, ||f||_{\infty}) |t-\tilde{t}|^{\frac{p}{2}}$ which gives us \[ \mathbb{E}[|J(s,t, x, y) - J(\tilde{s}, \tilde{t}, \tilde{x}, \tilde{y})|^p] \leq C(p, ||f||_{\infty})( |s-\tilde{s}|^{\frac{p}{2}}+ |t-\tilde{t}|^{\frac{p}{2}} ).\]
If $t \geq\tilde{s}$, then note that 
\begin{eqnarray*}
  J(s,t,x, y) &=& \int_s^{\tilde{s}} \int_0^1 f(u\phi(s,r,x)  + (1-u)\phi(s,r, y))dudB_r \\ && \hspace{0.5in} + \int_{\tilde{s}}^t\int_0^1 f(u\phi(s,r,x) + (1-u)\phi(s,r, y))dudB_r\end{eqnarray*}
and
\begin{eqnarray*}
 J(\tilde{s}, \tilde{t}, \tilde{x}, \tilde{y}) &=& \int_t^{\tilde{t}} \int_0^1 f(u\phi(\tilde{s},r,\tilde{x}) + (1-u)\phi(\tilde{s},r, \tilde{y}))dudB_r \\ && \hspace{0.5in} + \int_{\tilde{s}}^{t} \int_0^1 f(u\phi(\tilde{s},r,\tilde{x}) + (1-u)\phi(\tilde{s},r, \tilde{y}))dudB_r.
  \end{eqnarray*}
Thus using the previous step, 
\begin{align*}
\mathbb{E}[|J(s,t, x, y) - J(\tilde{s}, \tilde{t}, \tilde{x}, \tilde{y})|^p] & \leq C(p, ||f||_{\infty})( |s-\tilde{s}|^{\frac{p}{2}}+ |t-\tilde{t}|^{\frac{p}{2}} )  + \Theta(s,t, x, y, \tilde{s},\tilde{x},\tilde{y})
\end{align*}
where $ \Theta(s,t, x, y, \tilde{s},\tilde{x},\tilde{y})$ is defined by
\begin{eqnarray*}
  \lefteqn{\Theta(s,t, x, y, \tilde{s},\tilde{x},\tilde{y})}\\
  &= &\mathbb{E}\biggl[ \biggl|\int_{\tilde{s}}^t\int_0^1 \{ f(u\phi(s,r,x) + (1-u)\phi(s,r, y)) -  f(u\phi(\tilde{s},r,\tilde{x}) + (1-u)\phi(\tilde{s},r, \tilde{y}))\}dudB_r\biggr|^p\biggr]  \end{eqnarray*}
Again using Burkholder-Davis-Gundy inequality and \eqref{Lp-bound}, 
\begin{eqnarray*}\lefteqn{\Theta(s,t, x, y, \tilde{s},\tilde{x},\tilde{y})}\\&& \leq C(p,T) \times \\ && \times \int_{\tilde{s}}^t\int_0^1 \mathbb{E}[| f(u\phi(s,r,x) + (1-u)\phi(s,r, y)) -  f(u\phi(\tilde{s},r,\tilde{x}) + (1-u)\phi(\tilde{s},r, \tilde{y}))|^p]dudr\\
&& \leq C(p,\theta, ||f||_{\theta}, T) \int_0^T\{ \mathbb{E}[|\phi(s,r,x) - \phi(\tilde{s},r,\tilde{x}|^{p\theta }] + \mathbb{E}[|\phi(s,r, y)- \phi(\tilde{s},r, \tilde{y})|^{p\theta}]\}dr \\
&&\leq C(p,\theta, ||f||_{\theta}, T)  (| s-\tilde{s}|^{\frac{p\theta}{2}} + |x-\tilde{x}|^{p\theta} + |y-\tilde{y}|^{p\theta} )
\end{eqnarray*}

\end{proof}

\begin{proof}[Proof of Proposition \ref{exp-identity}(d)] Result follows from (b), (c) and application of the Kolmogorov Continuity Theorem.
\end{proof}

\section{Proof of Main Results} \label{pmrs}

\subsection{Proof of Theorem \ref{gen-catellier-gubinelli}}

\begin{proof}
 Path by path existence of solutions for (\ref{unit-sigma}) follows easily via an application of  Schauder's fixed point Theorem. For completeness sake, we show this in Section \ref{existence}. The reader may also refer to  \cite{CG} for $\frac{1}{3} <H < \frac{1}{2}$ and \cite{davie} for $H= \frac{1}{2}$. So using Proposition \ref{transformation} path by path existence of solutions to (\ref{modelRDE}) follows.
  
  Let $X$ be a solution to \eqref{modelRDE}, then by Proposition \ref{transformation}, $Z_t = G(X_t)$ solves \eqref{unit-sigma}. From Theorem $1.8$ in \cite{CG} (for $\frac{1}{3} <H < \frac{1}{2}$) and Theorem $1.1$ in \cite{davie} (for $H= \frac{1}{2}$), it follows that $Z$ is the path-by-path unique solution to equation \eqref{unit-sigma}. Finally since $G$ is bijective, \eqref{modelRDE} has the path-by-path unique solution. Since $G$ is determined by $f$ and $\sigma$, the null set obtained for path by path uniqueness will depend on $x_0,f,$ and $\sigma$.  
\end{proof}

\subsection{Proof of Theorem \ref{smooth-flow} and Theorem \ref{pbypins}}

\subsubsection{$\sigma\equiv 1$ and $f$ compactly supported}
In this subsection we prove Theorem \ref{smooth-flow} and Theorem \ref{pbypins} under the assumption $\sigma \equiv 1$ and $f$ compactly supported.  

\begin{proof}[Proof of Theorem \ref{smooth-flow}]

 (a) Let $\phi$ be the flow constructed in \cite{Flandoli-Fedrizzi}. From Proposition \ref{exp-identity} (d) we know that $\phi$ has a continuous modification, call it $\psi$. Then it can be easily checked that $\psi$ satisfies the conditions in Definition \ref{just-flow}. 

 (b) From Proposition \ref{exp-identity} (d) we know that $J$ has a continuous modification, call it $\tilde{J}$. Let $$\tilde{Z}^u(s,t,x,y) = u\psi(s,t,x) + (1-u)\psi(s,t,y)$$ and 
\begin{eqnarray*}
  \lefteqn{\tilde{I}^u(s,t,x,y) }\\
  &=&F(\tilde{Z}^u(s,t,x,y))-F(\tilde{Z}^u(s,s,x,y))  \\
& -&\int_s^t f(\tilde{Z}^u(s,r,x,y))(uf(\psi(s,r,x)) + (1-u)f(\psi(s,r,y)))dr,
\end{eqnarray*}
where $F' = f$. Since $\psi$ is a continuous modification of $\phi$, the function $\tilde{I}$ defined by 
\[\tilde{I}(s,t,x,y) := \int_0^1\tilde{I}^u(s,t,x,y)du \] is almost surely jointly continuous in $s,t,x,y$.  Since $\psi, \tilde{I}$ and $\tilde{J}$ are continuous modifications of $\phi, I, J$ respectively, Proposition \ref{exp-identity} (a) implies that almost surely for all $s,t,x,y$,
\[ \psi(s,t,x) - \psi(s,t,y) = (x-y)\exp[2\{\tilde{I}(s,t,x,y) - \tilde{J}(s,t,x,y)\}].\]
Thus we conclude that $\psi$ is almost surely differentiable with 
\[ D\psi(s,t,x) = \exp[2\{\tilde{I}(s,t,x,x) - \tilde{J}(s,t,x,x)\}],\]
which proves the smoothness of the flow. Now, from Proposition \ref{exp-identity} (d), $\psi$ is almost surely jointly $\eta$-H\"older continuous for any $\eta< \frac{1}{2}$ and $\tilde{J}$ is almost surely jointly $\eta$-H\"older continuous for any $\eta < \frac{\theta}{2}$. This implies $\{\tilde{I}(s,t,x,x) - \tilde{J}(s,t,x,x)\}$ is jointly $\eta$-H\"older continuous for any $\eta < \frac{\theta}{2}$ which concludes the proof.

\end{proof}

\begin{proof}[Proof of Corollary \ref{likeflandoli}]
Note that $|\psi(s,t,x)| \to \infty$ as $x \to \infty$. Since $|D\psi(s,t,x)| \neq 0$, the claim follows from Hadamard global inverse theorem. 
\end{proof} 

\begin{proof}[Proof of Theorem \ref{pbypins}]
We consider the null set $\mathcal{N}$ obtained from Theorem \ref{smooth-flow}. Thus, in particular for all $\omega \notin \mathcal{N}$ and for all $T, R > 0$, \[||\psi||_{1,R} := \sup_{0\leq s \leq t\leq T} \sup_{|x|,|y| \leq R}\frac{|\psi(s,t,x) - \psi(s,t,y)|}{|x-y|} < \infty.\] 

Now let $X_t$ be a solution to equation \eqref{modelSDE1}. Fix $t \in [0,T]$ and for $r \in [0,t]$, define $L_r = \psi(r,t,X_r) - \psi(0,t,x)$. Now, for $u\leq r$, 
\begin{align*} |L_r - L_u| &= |\psi(r, t, X_r) - \psi(u, t, X_u)| \\
&= |\psi(r, t, X_r) -  \psi(r,t,\psi(u,r, X_u))|\\
&\leq ||\psi||_{1,R}|X_r - \psi(u,r, X_u)|,
\end{align*}
for some large enough $R$.
Note that  \[ |X_r  -\psi(u, r, X_u) | \leq \int_u^r |f(X_{\theta}) -  f(\psi(u,\theta, X_u))|d\theta,\]
which implies \[ \frac{|L_r - L_u|}{|r-u|} \leq ||\psi||_{1,R} \frac{1}{r-u}\int_u^r |f(X_{\theta}) -  f(\psi(u,\theta, X_u))|d\theta.\]
Taking $r \downarrow u$ or $u \uparrow r$ implies $\frac{dL_r}{dr} = 0$ for all $r \in [0,t]$ and thus $L_t = L_0$. Since $L_0 = 0$, we conclude that  $X_t =  \psi(0,t,x)$ which establishes the uniqueness of $X$.
\end{proof}

\subsubsection{General $\sigma$ and $f$}

 If $\sigma \equiv 1$ and $f$ has non compact support, we choose a sequence of bounded continuous functions $f^n$ such that $$
 f^n(x) = \begin{cases}  f(x) & \mbox{ for }  |x| \leq n\cr
   0 & \mbox{ for } |x| > 2n. \end{cases} $$ Let $\psi^n$ be the flow,  corresponding to $f^n$,  constructed in the proof of Theorem \ref{smooth-flow} in the  previous section. Each $\psi^n$ is continuously differentiable for $\omega \notin \mathcal{N}_n$ and null sets $\mathcal{N}_n$ do not depend on $x$ in the proof of Theorem \ref{pbypins} in the previous section. 

 Then for $\mathcal{N} = \cup_n \mathcal{N}_n$,  $\omega \notin \mathcal{N}$ and $x \in \R$, define $\psi(s,t,x) = \psi^n(s,t,x)$ by choosing $n$ large enough (depending on $\omega, x$ and $t$).

  $\psi$ defined above is a well defined function (courtesy Theorem \ref{pbypins} in the previous section). Further $\psi$ is the flow corresponding to function $f$, i.e. satisfies Definition \ref{just-flow} for function $f$. Clearly, $\psi$ is automatically continuously differentiable.

 The general $\sigma$ case follows from direct application of Proposition \ref{transformation} if the rough path $\mathbf{B} = \mathbf{B}^{Strat}$ is chosen. If the rough path $\mathbf{B} = \mathbf{B}^{Ito}$ is chosen, by \eqref{correction-term}, one can transform equation \eqref{modelRDE} back to the choice of $\mathbf{B} = \mathbf{B}^{Strat}$ by changing the function $f$ to $\tilde{f}$ given by $\tilde{f}(x) = f(x) - \frac{1}{2}D\sigma \sigma$. This completes the proof of Theorem \ref{smooth-flow}.

 The proof of Theorem \ref{pbypins} follows as in the previous section.

\begin{appendices}
  \section{Preliminaries on Rough Paths. } \label{a-prps}

  We shall state some basic  preliminaries from rough path theory. We refer the
reader to \cite{FH12} and \cite{FV10} for detailed exposition of the
same. Let $T>0, d \geq 1$ be fixed and we shall refer to $W:[0,T]
\rightarrow \R^d$ as a path. We will use the notation of $$W_{s,t} :=
W_t - W_s$$ for $s,t \in [0,T]$ a path $W:[0,T] \to \mathbb{R}^d$.
For $\alpha \in (0,1)$, we shall call a path $W:[0,T] \to
\mathbb{R}^d$ a $\alpha$-H\"older path if the path is H\"older
continuous with exponent $\alpha$. For vector spaces $V$ and $W$,
$V\otimes W$ denotes the tensor product of these vector spaces. We
first define the notion of rough paths.

\begin{definition}[Rough paths] \label{rough-path}
   Given   $\alpha \in (0, 1)$ and $W:[0,T] \to
     \mathbb{R}^d$ a $\alpha$-H\"older path, a $\alpha$-rough path
     $\mathbf{W}$ is a pair of maps $\mathbf{W} = (W, \mathbb{W}) :
            [0,T]\times [0,T] \rightarrow \mathbb{R}^d \oplus
            (\mathbb{R}^d\otimes \mathbb{R}^d)$ such that:
\begin{itemize}
\item For $s,t \in [0,T]$, $W_{s,t} = W_t - W_s $
\item For any $s, u, t \in [0,T]$ 
\begin{equation}\label{chen}
\mbox{Chen's relation:} \hspace{4mm}\mathbb{W}_{s,t} - \mathbb{W}_{s,u} - \mathbb{W}_{u,t} = W_{s,u}\otimes W_{u,t}.
\end{equation}

\item $W$ and $\mathbb{W}$ has $\alpha$ and $2 \alpha$ H\"older regularity respectively, i.e. 
\begin{equation}\label{analytical condition} ||W||_{\alpha} := \sup_{s,t \in[0,T], s \neq t} \frac{|W_{s,t}|}{|t-s|^{\alpha}} < \infty, \hspace{4mm} ||\mathbb{W}||_{2\alpha} := \sup_{s,t \in[0,T], s \neq t} \frac{|\mathbb{W}_{s,t}|}{|t-s|^{2\alpha}} < \infty 
\end{equation}

\end{itemize}
Further, a rough path $\mathbf{W}$ is called a geometric rough path if in addition for all $s,t \in [0,T]$
\begin{equation}\label{GRP} Sym(\mathbb{W}_{s,t}) = \frac{1}{2}W_{s,t}\otimes W_{s,t},
\end{equation}
where $Sym(M)$ denotes the symmetric part of matrix $M$. Lastly, for $s \in [0,T)$, a $\alpha$-H\"older path $W:[0,T]\to \mathbb{R}^d$ is said to be ``rough at time $s$" if for all $\phi \in (\mathbb{R}^d)^{*}$, $\phi \neq 0$, 
\[ \limsup_{t \downarrow s} \frac{|\phi(W_{s,t})|}{|t-s|^{2\alpha}} = +\infty\] 
$W$ is called truly-rough if $W$ is rough on some dense subset of $[0,T]$.

\end{definition}
For our path by path construction we need the notion of rough integral which are  constructed using controlled rough paths.
\begin{definition}[Controlled rough paths] \label{CRP} Given an $\alpha$-rough path $\mathbf{W}$, a pair of continuos paths $(Y, Y')$, with $Y$ taking value in some $\mathbb{R}^m$ and $Y'$ in $\mathbb{R}^{m \times d }$, is called a $W$-controlled rough path if  
\begin{itemize}
\item $||Y||_{\alpha} + ||Y'||_{\alpha} < \infty.$
\item The object $ R^Y$ defined by $R^Y_{s,t} := Y_{s,t} - Y_s'X_{s,t}$ has $2 \alpha$ regularity, i.e. \[||R^Y||_{2\alpha} := \sup_{s,t }\frac{|R^Y_{s,t}|}{|t-s|^{2\alpha}} < \infty.\]
  
\end{itemize} 
The path $Y'$ is called the Gubinelli derivative of $Y$ with respect to $X$.
\end{definition}
We denote $\mathcal{C}^{\alpha}_W(\mathbb{R}^m)$ to be Banach space of all controlled rough paths $(Y, Y')$ taking values in  $\mathbb{R}^m \oplus \mathbb{R}^{m\times d}$ with the norm defined as 
\[ ||(Y, Y')||_{\mathcal{C}^{\alpha}_W} := |Y_0| + |Y_0'| + ||(Y,Y')||_{W,\alpha}\]
where \[
 ||(Y,Y')||_{W,\alpha} := ||Y'||_{\alpha} + ||R^Y||_{2\alpha}
 \]
 The concept of true-roughness of rough paths, defined above,  was introduced in
\cite{FS1}. It enables us to
define controlled rough paths in a unique and unambiguous manner. That is, if $(Y,Y'), (\tilde{Y}, \tilde{Y}') \in C_W^{\alpha}(\mathbb{R}^d)$ are two controlled rough paths and $W$ is truly rough, then 
\begin{equation} \label{doob-meyer-thm} Y\equiv \tilde{Y} \implies Y' \equiv \tilde{Y}'.\end{equation}

The rough integral is constructed as follows. Let $\mathbf{W}$ be a rough path and $(Y, Y')$ be a controlled rough path taking values in $(\mathbb{R}^{m \times d}, \mathbb{R}^{m \times d \times d} )$. Then the rough integral, 
\[ \int_0^T Y_rd\mathbf{W}_r := \lim\limits_{|\mathcal{P}| \to 0 } \sum_{[s,t] \in \mathcal{P}} Y_sW_{s,t} + Y_s'\mathbb{W}_{s,t}\]

exists, where for a partition $\mathcal{P}$ of [0,T], $[s,t] \in
\mathcal{P}$ denotes a subinterval of $\mathcal{P}$ and
$|\mathcal{P}|$ denotes its mesh size. Furthermore there exists a
constant $C$ depending only on $\alpha$ such that for all $s, t$,
\begin{equation}\label{rough inequality} \biggl|\int_s ^t Y_rd\mathbf{W}_r - Y_sW_{s,t} - Y_s'\mathbb{W}_{s,t} \biggr| \leq C ( ||W||_{\alpha}||R^Y||_{2\alpha} + ||\mathbb{W}||_{2\alpha}||Y'||_{\alpha}) |t-s|^{3\alpha} 
\end{equation}

{\bf Choice of Rough Paths for $B$:} When $H=\frac{1}{2}$, $B^H$
corresponds to standard Brownian motion $B$ (where $B$ is an
abuse of notation for $B^{\frac{1}{2}}$.). Almost surely, such paths
can also be lifted to a pair $(B,\mathbb{B})$ making it into a
rough path. Thus for certain adapted processes $Y$, we can give appropriate
rough path interpretations for the Ito-Integral $ \int_0^T Y_rdB_r$
and the Stratonovich integral $\int_0^T Y_r\circ dB_r.$ We refer the
reader to \cite{FH12} for details. In short, there exists a null set
$\mathcal{N}$ such that for $\omega \notin \mathcal{N}$,
$\mathbf{B}^{Ito}(\omega)$ and $\mathbf{B}^{Strat}(\omega)$ defined by
\[\mathbf{B}_{s,t}^{Ito}(\omega):= \biggl(B_t(\omega) - B_s(\omega), \int_s^t B_{s,r}\otimes dB_r(\omega)\biggr)\]
\[\mathbf{B}_{s,t}^{Strat}(\omega):= \biggl(B_t(\omega) - B_s(\omega), \int_s^t B_{s,r}\otimes \circ dB_r(\omega)\biggr)\]
are well defined $\alpha$-rough paths for all $\frac{1}{3}<\alpha < \frac{1}{2}$. The rough path $\mathbf{B}^{Strat}$ is in fact a geometric rough path. Furthermore, for any $B$-controlled rough path $(Y,Y')$
\begin{equation}\label{correction-term} \int_0^T (Y_r, Y_r')d\mathbf{B}_r^{Strat} = \int_0^T (Y_r, Y_r')d\mathbf{B}_r^{Ito} + \frac{1}{2}\int_0^T Y_r'dr.
\end{equation}

If $(Y(\omega), Y'(\omega))$ are random controlled rough paths such that $Y$ and $Y'$ are adapted, then almost surely,
\[ \int_0^T (Y_r, Y_r')d\mathbf{B}_r^{Ito} = \int_0^T Y_rdB_r.\]
Furthermore, if the quadratic covariation $[Y,B]$ exists, then almost
surely \[ \int_0^T (Y_r, Y_r')d\mathbf{B}_r^{Strat} = \int_0^T
Y_r\circ dB_r.\]
 
  \section{Existence of Solutions}
\label{existence}
 
In this section we establish existence of a solution to \eqref{modelRDE} directlty under the assumption that $f$ is bounded continuous and $\sigma$ is a $C^3_b$ function. The existence of path by path solution for (\ref{unit-sigma}) immediately follows.

We will rely on Schauder's fixed point theorem to prove existence of a solution to \eqref{modelRDE}. Following a trivial
remark that $\alpha$-rough paths are also $\beta$-rough path for any
$\beta \in (\frac{1}{3}, \alpha)$, we choose constants $\beta$ and
$\gamma$ with $\frac{1}{3} < \beta < \gamma < \alpha < \frac{1}{2}$
and consider controlled rough path space $C^{\beta}_{B^H}$. Let
$\kappa(x_0) \in C^{\beta}_{B^H} $ denote the controlled rough path
defined as $\kappa(x_0) := ( x_0 + \sigma(x_0){B^H}_{0,.} , \sigma(x_0))$
and define the set
\[ K := \biggl\{ (Y,Y') \in C^{\beta}_{B^H} \biggl | Y_0 = x_0, Y_0' = \sigma(x_0), ||(Y,Y') - \kappa(x_0)||_{C^{\gamma}_{B^H}} \leq 1  \biggr\}\]   

Following easy manipulations, the set $K$ can also be written as 
\[ K = \biggl\{ (Y,Y') \in C^{\beta}_{B^H} \biggl | Y_0 = x_0, Y_0' = \sigma(x_0), ||(Y,Y') ||_{{B^H}, \gamma} \leq 1  \biggr\}\]  
and Arzela-Ascoli theorem easily implies that $K$ is a compact convex subset of $C^{\beta}_{{B^H}}$.  Let $\Psi$ be as in (\ref{Psi}). We next establish the continuity of the map $\Psi$ defined above. 

\begin{lemma}
The map $\Psi: C^{\beta}_{B^H}(\mathbb{R}^d)\to C^{\beta}_{B^H}(\mathbb{R}^d)$ is continuous.
\end{lemma}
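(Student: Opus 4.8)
The plan is to realize $\Psi$ as a finite sum and composition of elementary maps, each of which is already known to be continuous on $C^\beta_{B^H}$, and then conclude by the fact that sums and compositions of continuous maps are continuous. Since continuity is a local property and the controlled-rough-path norm gives uniform bounds on bounded sets, it suffices to verify continuity at an arbitrary $(Y,Y')$ after restricting to a bounded neighbourhood, on which every constituent map will in fact be Lipschitz. Concretely, I would write
\[ \Psi(Y,Y') = (A(Y),0) + \mathcal{I}\bigl(\sigma(Y), D\sigma(Y)Y'\bigr), \]
where $A(Y)_t := x_0 + \int_0^t f(Y_r)\,dr$ is the Riemann-integral term (which carries Gubinelli derivative zero), and $\mathcal{I}$ denotes the rough-integration map $(Z,Z')\mapsto \bigl(\int_0^\cdot Z\,d\mathbf{B}^H,\,Z\bigr)$. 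The second component $\sigma(Y_t)$ of $\Psi(Y,Y')$ then appears automatically as the Gubinelli derivative produced by $\mathcal{I}$, so it is enough to control the two displayed summands in $C^\beta_{B^H}$.

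For the rough summand I would invoke two standard facts. First, the composition map $(Y,Y')\mapsto (\sigma(Y), D\sigma(Y)Y')$ is locally Lipschitz from $C^\beta_{B^H}$ to itself because $\sigma\in C^2_b$ by (S1); this is the usual composition estimate for controlled rough paths in \cite{FH12}. Second, the integration map $\mathcal{I}$ is continuous (indeed locally Lipschitz on bounded sets) by the stability estimate for rough integrals, cf. Theorem $4.10$ in \cite{FH12} and the bound \eqref{rough inequality}, using that $\|B^H\|_\alpha + \|\mathbb{B}^H\|_{2\alpha}<\infty$ almost surely by Lemma \ref{rough-path-fbm}. Composing these two continuous maps shows that $(Y,Y')\mapsto \mathcal{I}(\sigma(Y),D\sigma(Y)Y')$ is continuous.

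It then remains to treat the Riemann term $Y\mapsto (A(Y),0)$, the only piece not covered by the standard machinery since $f$ is merely bounded and continuous. Given $(Y^n,(Y^n)')\to (Y,Y')$ in $C^\beta_{B^H}$, one has $Y^n\to Y$ uniformly with the $Y^n$ confined to a fixed compact subset of $\R^d$ on which $f$ is uniformly continuous, so $\varepsilon_n := \sup_r |f(Y^n_r)-f(Y_r)|\to 0$. Because $(A(Y^n)-A(Y))_{s,t} = \int_s^t \{f(Y^n_r)-f(Y_r)\}\,dr$ has modulus at most $\varepsilon_n|t-s|$ and $|t-s|\leq T^{1-2\beta}|t-s|^{2\beta}$ for $|t-s|\leq T$ (valid since $\beta<\tfrac12$), while both Gubinelli derivatives vanish identically, the full difference in $C^\beta_{B^H}$ reduces to the $2\beta$-remainder norm, which is bounded by $C_T\,\varepsilon_n\to 0$. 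Hence $Y\mapsto (A(Y),0)$ is continuous, and adding the two continuous summands yields the continuity of $\Psi$. I expect the main obstacle to be the rough summand: establishing the quantitative stability of $\mathcal{I}$ together with the composition estimate for $\sigma$, which is exactly where the regularity hypothesis (S1) on $\sigma$ is essential; by contrast the Riemann term is dispatched by the soft uniform-continuity argument above.
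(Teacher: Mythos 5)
Your proposal is correct and follows essentially the same route as the paper: the same splitting of $\Psi$ into the Riemann-integral summand (with vanishing Gubinelli derivative) and the rough-integral summand, with the latter handled by the composition and integration stability results of \cite{FH12} (Theorems $7.5$ and $4.10$), and the former by a soft argument exploiting $|t-s|\leq T^{1-2\beta}|t-s|^{2\beta}$. The paper dispatches the drift term via H\"older's inequality with $p=(1-2\beta)^{-1}$ and dominated convergence rather than your uniform-continuity-on-a-compact argument, but the two are interchangeable.

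One small imprecision: you attribute the local Lipschitz continuity of $(Y,Y')\mapsto(\sigma(Y),D\sigma(Y)Y')$ to (S1), i.e.\ to $\sigma\in C^2_b$. The standard stability estimate for composition of controlled rough paths requires one more derivative ($\sigma\in C^3_b$); $C^2_b$ only guarantees that the composition is a well-defined controlled rough path, not the quantitative continuity of the composition map. This is precisely why the paper's appendix strengthens the hypothesis to $\sigma\in C^3_b$ for the existence argument, and you should do the same (or supply a separate compactness argument for mere continuity under $C^2_b$).
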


{\em Proof :} We write $\Psi = \Psi_1 + \Psi_2$ where 
\[ \Psi_1(Y, Y') := \biggl( \int_0^t f(Y_r)dr, 0 \biggr) \]

\[ \Psi_2(Y, Y') := \biggl(x_0  + \int_0^t \sigma(Y_r)d\mathbf{{B}}^H_r , \sigma(Y_t) \biggr)\]

Since $\sigma \in C^3_b$, it follows from Theorem $7.5$ in \cite{FH12}
that the map $S_\sigma: C^{\beta}_{{B^H}}(\mathbb{R}^d) \to
C^{\beta}_{{B^H}}(\mathbb{R}^{d\times d })$ defined by $S_\sigma(Y, Y') :=
(\sigma(Y), \sigma'(Y)Y')$ is a continuous map and Theorem $4.10$ in
\cite{FH12} implies $\Psi_2$ is a continuous map. As for the
continuity of map $\Psi_1$, choose $p, q$ with $p = (1 - 2\beta)^{-1}$
and $p^{-1} + q^{-1} = 1$. If $(Y^n, Y^{n'})$ is a sequence of
controlled rough paths converging to $(Y, Y')$, then by H\"older
inequality,
\[ \biggl|\int_s^t \{f(Y_r^n) - f(Y_r) \}dr\biggr|  \leq \biggl\{\int_0^T |f(Y_r^n) - f(Y_r)|^p dr\biggr\}^{\frac{1}{p}} |t-s|^{2\beta}  \]and using the continuity of $f$ and dominated convergence theorem, we observe that 
\[ \biggl|\biggl|\int_0^. \{f(Y_r^n) - f(Y_r) \}dr \biggr|\biggr|_{2\beta} \to 0 \] implying $\Psi_1$ is continuous. 

\qed

From above Lemma, we easily see that the map $\Psi$ restricted to the subset $K \subset C^{\beta}_{{B^H}}$ is continuous. In the next Lemma, we establish the invariance of $K$ under $\Psi$. 

\begin{lemma} There exists $T$ small enough such that $\Psi(K) \subset K $ for all initial values $x_0 \in \mathbb{R}^d$.
\end{lemma}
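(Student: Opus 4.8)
The plan is to fix an arbitrary $(Y,Y') \in K$, set $(Z,Z') := \Psi(Y,Y')$ as in \eqref{Psi}, and show that $(Z,Z') \in K$ once $T$ is small, with the smallness threshold uniform in $x_0 \in \R^d$ and in $(Y,Y') \in K$. Evaluating \eqref{Psi} at $t=0$ gives $Z_0 = x_0$ and $Z_0' = \sigma(Y_0) = \sigma(x_0)$, so the two boundary conditions defining $K$ hold automatically, and $(Z,Z')$ is $\gamma$-controlled (hence $\beta$-controlled). In view of the second description of $K$ it then remains to bound
\[ ||(Z,Z')||_{B^H,\gamma} = ||Z'||_\gamma + ||R^Z||_{2\gamma}, \qquad Z' = \sigma(Y),\ \ R^Z_{s,t} = Z_{s,t} - \sigma(Y_s)B^H_{s,t}. \]
I will in fact show this quantity tends to $0$ as $T \downarrow 0$, uniformly in $(Y,Y')$ and $x_0$.

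The key device is the elementary interpolation bound: for any path $W$ and $\gamma < \alpha$, $||W||_{\gamma;[0,T]} \le T^{\alpha-\gamma}||W||_{\alpha;[0,T]}$, since $|W_{s,t}|/|t-s|^{\gamma} = (|W_{s,t}|/|t-s|^{\alpha})|t-s|^{\alpha-\gamma}$. Fixing $\omega \in \Omega'$ and a reference interval $[0,1]$, this gives $||B^H||_{\gamma;[0,T]} \le C_\omega T^{\alpha-\gamma}$ and $||\mathbb{B}^H||_{2\gamma;[0,T]} \le C_\omega T^{2(\alpha-\gamma)}$ for $T\le 1$, both vanishing as $T \downarrow 0$. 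For $(Y,Y') \in K$ we have $||Y'||_\gamma, ||R^Y||_{2\gamma} \le 1$, and since $Y_0' = \sigma(x_0)$, $\sup_s |Y_s'| \le |\sigma(x_0)| + T^\gamma \le ||\sigma||_\infty + T^\gamma$. Feeding these into $Y_{s,t} = Y_s'B^H_{s,t} + R^Y_{s,t}$ yields $||Y||_\gamma \le (||\sigma||_\infty + T^\gamma)||B^H||_\gamma + T^\gamma = O(T^{\alpha-\gamma} + T^\gamma)$, so $||Z'||_\gamma = ||\sigma(Y)||_\gamma \le ||D\sigma||_\infty\,||Y||_\gamma \to 0$.

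For the remainder I would split $R^Z_{s,t} = \int_s^t f(Y_r)\,dr + \bigl(\int_s^t \sigma(Y_r)\,d\mathbf{B}^H_r - \sigma(Y_s)B^H_{s,t}\bigr)$. The drift obeys $|\int_s^t f(Y_r)\,dr| \le ||f||_\infty T^{1-2\gamma}|t-s|^{2\gamma}$, contributing $||f||_\infty T^{1-2\gamma}$ (finite as $\gamma < \tfrac12$). For the rough part I would apply \eqref{rough inequality} at regularity $\gamma$ (legitimate since $B^H$ is also a $\gamma$-rough path) to the controlled rough path $(\sigma(Y),\sigma'(Y)Y')$:
\[ \int_s^t \sigma(Y_r)\,d\mathbf{B}^H_r - \sigma(Y_s)B^H_{s,t} = \sigma'(Y_s)Y_s'\,\mathbb{B}^H_{s,t} + \rho_{s,t}, \quad |\rho_{s,t}| \le C\bigl(||B^H||_\gamma ||R^{\sigma(Y)}||_{2\gamma} + ||\mathbb{B}^H||_{2\gamma}||(\sigma(Y))'||_\gamma\bigr)|t-s|^{3\gamma}. \]
The first term has $2\gamma$-seminorm at most $||D\sigma||_\infty(||\sigma||_\infty + T^\gamma)||\mathbb{B}^H||_{2\gamma} = O(T^{2(\alpha-\gamma)})$, and $\rho$ contributes $O(T^\gamma)$ via $|t-s|^{3\gamma} \le T^\gamma|t-s|^{2\gamma}$, provided $||R^{\sigma(Y)}||_{2\gamma}$ and $||(\sigma(Y))'||_\gamma$ stay bounded. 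This last point is exactly the composition estimate of Theorem $7.5$ in \cite{FH12}: since $\sigma \in C^3_b$ and $||(Y,Y')||_{B^H,\gamma} \le 1$, these norms are dominated by a constant depending only on $||\sigma||_{C^3_b}$ and $||B^H||_\gamma, ||\mathbb{B}^H||_{2\gamma}$ (hence uniformly bounded for $T\le 1$). Collecting terms, $||R^Z||_{2\gamma} = O(T^{1-2\gamma} + T^{\alpha-\gamma} + T^{2(\alpha-\gamma)} + T^\gamma)$.

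Finally I would observe that every constant appearing above depends only on $||f||_\infty$, $||\sigma||_{C^3_b}$, and the fixed rough-path norms $||B^H||_{\alpha;[0,1]}, ||\mathbb{B}^H||_{2\alpha;[0,1]}$, together with $|\sigma(x_0)| \le ||\sigma||_\infty$; in particular none depends on $x_0$. Thus $||\Psi(Y,Y')||_{B^H,\gamma} \to 0$ as $T \downarrow 0$ uniformly in $(Y,Y') \in K$ and $x_0$, and choosing $T$ small enough forces the value to be $\le 1$, giving $\Psi(K) \subset K$. I expect the main obstacle to be the bookkeeping across the two H\"older scales $\gamma < \alpha$: one must arrange every estimate so that the composition and rough-integral constants (which are only bounded, not small) are always multiplied by a genuinely vanishing scale factor $T^{\kappa}$ supplied by the interpolation inequality, while simultaneously checking that all of these constants are uniform in $x_0$.
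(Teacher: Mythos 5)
Your proof is correct and follows essentially the same route as the paper: both verify the boundary conditions, then show $\|Z'\|_\gamma+\|R^Z\|_{2\gamma}\le 1$ for small $T$ by exploiting the gap between the H\"older scales $\gamma<\alpha$ (and $2\gamma<1$ for the drift) to extract vanishing powers of $T$, with all constants depending only on $\|f\|_\infty$, $\|\sigma\|_{C^3_b}$ and the rough-path norms, hence uniform in $x_0$. The only cosmetic differences are that you bound $\|\sigma(Y)\|_\gamma$ directly via $\|D\sigma\|_\infty\|Y\|_\gamma$ and cite the composition theorem for $\|R^{\sigma(Y)}\|_{2\gamma}$, where the paper writes out the controlled-path decomposition of $\sigma(Y)$ explicitly.
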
 

{\em Proof :}
Let $(Y, Y' ) $ be an element of $K$. From the definition of $K$, we need to establish that $||(Z, Z')||_{{B^H}, \gamma} = ||Z'||_{\gamma} + ||R^Z||_{2\gamma}  \leq 1$ for $T$ small enough, where \[(Z,Z') = \biggl(x_0 + \int_0^t f(Y_r)dr + \int_0^t \sigma(Y_r)d\mathbf{B}^H_r , \sigma(Y_t) \biggr).\]
To this end, note that $(\sigma(Y), \sigma'(Y)Y')$ is a controlled rough path and 
\begin{align*} \sigma(Y_t) - \sigma(Y_s) = &  \sigma'(Y_s)Y_s'(B^H_{s,t}) + R^{\sigma(Y)}_{s,t} \\  = &  \sigma'(Y_s)Y_{0,s}'(B^H_{s,t})  + \sigma'(Y_s)\sigma(x_0)(B^H_{s,t}) + R^{\sigma(Y)}_{s,t}.
\end{align*} 
Note that since $\sigma$ and $\sigma'$ are bounded and $||Y'||_{\gamma} \leq 1$, we get 
\[ |\sigma'(Y_s)Y_{0,s}'(B^H_{s,t})  + \sigma'(Y_s)\sigma(x_0)(B^H_{s,t})| \leq C_{\sigma}(T^{\gamma} + 1 )||{B^H}||_{\alpha}|t-s|^{\alpha}.\]
Also, by division property, 
\begin{align*} R^{\sigma(Y)}_{s,t} = & \int_0^1\int_0^1 r \sigma''(ur Y_t + u(1-r)Y_s + (1-u)Y_s )( Y_s'B^H_{s,t} + R^Y_{s,t})(Y_s'B^H_{s,t})drdu \\ + & \int_0^1 \sigma'(rY_t + (1-r)Y_s)R^Y_{s,t} dr.
\end{align*}
Again by observing that $ ||Y'||_{\gamma} + ||R^Y||_{2\gamma} \leq 1 $, $\sigma, \sigma', \sigma''$ are bounded and $|B^H_{s,t}| \leq ||{B^H}||_{\alpha}|t-s|^{\alpha}$, we see that $| R^{\sigma(Y)}_{s,t}| \leq C |t-s|^{2\gamma}$ and thus 
\[ | \sigma(Y_t) - \sigma(Y_s) | \leq C (|t-s|^\alpha + |t-s|^{2\gamma} ), \]
giving $||Z'||_{\gamma} \leq \frac{1}{2} $ for $T$ small enough.  \\
For controlling $||R^Z||_{2\gamma}$, note that \[ \biggl|\int_s^t f(Y_r) dr \biggr| \leq ||f||_{\infty} |t-s| \leq ||f||_{\infty}|t-s|^{2\gamma}T^{1-2\gamma}\]
and by Theorem $4.10$ in \cite{FH12}, \[ \biggl| \int_s^t \sigma(Y_r)d\mathbf{{B}}^H_r - \sigma(Y_s){B}^H_{s,t} - \sigma'(Y_s)Y_s'\mathbb{{B}}^H_{s,t}\biggr| \leq C (||{B^H}||_{\gamma}||R^{\sigma(Y)}||_{2\gamma} + ||\mathbb{{B}}^H||_{2\gamma}||\sigma'(Y)Y'||_{\gamma}) |t-s|^{3\gamma} \]
As shown above, $ ||R^{\sigma(Y)}||_{2\gamma} +  ||\sigma'(Y)Y'||_{\gamma} $  remain bounded over $(Y, Y')\in K$ and we see that 
\begin{align*} \biggl| \int_s^t \sigma(Y_r)d\mathbf{{B}}^H_r - \sigma(Y_s)B^H_{s,t} \biggr| \leq & C ( |t-s|^{3\gamma} + | \sigma'(Y_s)Y_s'\mathbb{{B}}^H_{s,t}| ) \\ \leq & C( |t-s|^{3\gamma} + ||\mathbb{{B}}^H||_{2\alpha}|t-s|^{2\alpha} )
\end{align*}
and thus $||R^Z||_{2\gamma} \leq \frac{1}{2}$ for $T$ small enough, which concludes the proof. 
\qed

As an immediate consequence of the above preparation, we get the existence of solution to \eqref{modelRDE}.

\begin{proof}[Proof of existence of solutions in  Theorem \ref{gen-catellier-gubinelli}] We first view $\Psi$ as a map $\Psi: C^{\beta}_{{B^H}}(\mathbb{R}^d) \to C^{\beta}_{{B^H}}(\mathbb{R}^d)$. From the above Lemmas, $K$ is a compact convex subset of $C^{\beta}_{{B^H}}(\mathbb{R}^d)$ and $\Psi: K \to K $ is a continuous a map over  $[0,\tilde{T}]$ for $\tilde{T}$ small enough. Thus by Schauder's fixed point theorem, $\Psi$ has a fixed point  $ (X, X') \in C^{\beta}_{{B^H}}(\mathbb{R}^d)$. Also since $\tilde{T}$ is not dependent on initial value $x_0$, we get a global solution on $[0,T]$. Finally, since $\mathbf{{B}}^H$ is an $\alpha$-rough path, it can be easily verified using Theorem $4.10$ in \cite{FH12} that $(X,X') \in C^{\alpha}_{{B^H}}(\mathbb{R}^d) $.

\end{proof}
\end{appendices}

Siva Athreya,
{\em 8th Mile Mysore Road, Indian Statistical Institute,
         Bangalore 560059, India.
         Email: \tt{athreya@isibang.ac.in}}

Suprio Bhar,
{\em Tata Institute of Fundamental Research, Centre For Applicable Mathematics, Post Bag No 6503,
    GKVK Post Office, Sharada Nagar, Chikkabommsandra, Bangalore 560065, Karnataka, India.
    Email: \tt{suprio@tifrbng.res.in, suprio.bhar@gmail.com}}

Atul Shekhar,
{\em 8th Mile Mysore Road, Indian Statistical Institute,
         Bangalore 560059, India.
    Email: \tt{atulshekhar83@gmail.com}}

\end{document}